\documentclass[a4paper,reqno,11pt]{amsart}
\usepackage{subfigure}

\usepackage{hyperref}
\usepackage{epstopdf}
\usepackage{color}
\usepackage{amssymb}
\usepackage{amsfonts}
\usepackage{amsmath}
\usepackage{mathrsfs}
\usepackage{dutchcal}
\usepackage{tikz}
\usepackage{float}

\newtheorem{theorem}{Theorem}[section]
\newtheorem{lemma}[theorem]{Lemma}

\theoremstyle{definition}

\newtheorem{remark}[theorem]{Remark}

\numberwithin{equation}{section}

\begin{document}
	\title[Elliptic inequalities]{Sharp Liouville type results for semilinear elliptic inequalities involving gradient terms on weighted graphs}
	\begin{abstract}
		
		 We study  nonexistence and existence of nontrivial positive solutions to the following semilinear elliptic
		inequality involving gradient terms
		\[
		\Delta u+u^p\left|\nabla u\right|^q\leq0,
		\]
		on weighted graphs, where $(p,q)\in\mathbb{R}^2$. We give a complete classification of $(p,q)$ under which sharp volume growth
assumptions
are established.
	\end{abstract}
	
	\author[Hao]{Lu Hao}
	\address{School of Mathematical Sciences and LPMC, Nankai University, 300071 Tianjin, P. R. China}
	\email{h18742512512@163.com}
	
	\author[Sun]{Yuhua Sun}
	\address{School of Mathematical Sciences and LPMC, Nankai University, 300071 Tianjin, P. R. China}
	\email{sunyuhua@nankai.edu.cn}
	\thanks{ Sun was supported by the National Natural Science Foundation of China (No.11501303).}
	
\subjclass[2010]{Primary 35J61; Secondary 58J05, 31B10, 42B37}
\keywords{Semilinear elliptic inequality; weighted graphs, gradient term; volume growth}

	\maketitle
	
	\section{Introduction}
	
	In this paper, we are mainly concerned with the existence and nonexistence of positives solutions to the following semilinear elliptic inequalities
	\begin{equation}\label{ieq}
		\Delta u+u^p\left|\nabla u\right|^q\leq0,\quad\mbox{on $G$},
	\end{equation}
	where $(p, q)\in \mathbb{R}^2$, $G=(V, E)$ is an infinite connected locally finite graph, and $V$
	is the collection of the vertices, and $E$ is the collection of edges. Throughout the paper,
	there exists only at most one edge for any two
	distinct vertices, and exists no any edge from a vertex to itself.
	
	If there is an edge connecting $x, y\in V$, we say
	$x\sim y$. On each edge, let us define an edge weight $\mu: E \to(0,\infty)$, which satisfies
	$\mu_{xy} = \mu_{yx}$, and $\mu_{xy} > 0$ if $x\sim y$. Here $\mu$ can be understood as a map from
	$V\times V\to[0,\infty)$ by adding that $\mu_{xy}=0$ provided that there is no edge connecting $x, y$.
	Such graph $G=(V, E, \mu)$ with edge weight $\mu$ is called a weighted graph. Sometimes, we use $(V, \mu)$ to denote
	the weighted graph $G$ for brevity.
	
	In this paper, we say condition $(p_0)$ is is satisfied on $G$: if there exists a constant $p_0\geq 1$ such that for any $x\sim y$ in $V$,
\begin{equation}
	\frac{\mu_{xy}}{\mu(x)}\geq \frac{1}{p_0}.\tag{$p_0$}
\end{equation}

	For each vertex  $x\in V$, let us define vertex measure $\mu(x)=\sum\limits_{x \sim y} \mu_{xy}$,
	and the Laplace operator $\Delta$ on $G$ (see \cite{G2}) as
	\begin{equation}\label{lap}
	\Delta u(x)=\sum\limits_{y \sim x} \frac{\mu_{xy}}{\mu(x)}(u(y)-u(x)),\quad\mbox{for $u\in \mathcal{l}(V)$},
	\end{equation}
	and define the gradient form $\Gamma$ (see \cite{BHLLMY}.) as
	$$\Gamma(f,g)=\sum\limits_{y \sim x}\frac{\mu_{xy}}{2\mu(x)}(f(y)-f(x))(g(y)-g(x)), \quad\mbox{for $f,g\in \mathcal{l}(V)$},$$
then the norm of gradient is defined by
	\begin{align}\label{gra}
		|\nabla u(x)|=\sqrt{\Gamma(u,u)}=\sqrt{\sum\limits_{y \sim x}\frac{\mu_{xy}}{2\mu(x)}(u(y)-u(x))^2},
		\quad\mbox{for $u\in \mathcal{l}(V)$},
	\end{align}
	where $\mathcal{l}(V)$ is the collection
	of all real functions on $V$.
	
	Besides, there exists graph distance $d(x,y)$ on $G$ which means the minimal
	number of edges in a path among all possible paths connecting $x$ and $y$ in $V$. Fix some referenced vertex $o\in V$, and for integer $n\geq1$, let
	$$B(o,n):=\{x \in V: d(o,x) \leq n\}$$
	be the closed ball centered at $o$ with radius $n$, and the volume of $B(o, n)$ be
	$$\mu(B(o, n))=\sum_{x\in B(o,n)}\mu(x).$$
	
Recently, the study on elliptic equation on weighted graphs has attracted a lot of attentions, see \cite{CM} \cite{GHJ}, \cite{GLY2} \cite{GLY3}
\cite{HSZ}, \cite{HWY} \cite{LiuY}.
	In this paper we would like to solve the following problem: what kind of sharp
	assumptions on $\mu(B(o,n))$ can suffice the nonexistence of  nontrivial positive solution $u$ to (\ref{ieq})?
	There are two folds in this problem: the first one is to find these volume growth and to prove the nonexistence
	results; the second is to show these volume assumptions are sharp.

	By using the volume assumption to obtain the nonexistence and existence of solution to elliptic differential inequalities
	is widely used in the literature. Recall the famous Nash-Williams' test (e.g. \cite{W}): if
	\begin{eqnarray}\label{votest}
		\sum^{\infty}_{n=1}\frac{n}{\mu(B(o,n))}=\infty,
	\end{eqnarray}
	then any nonnegative solution on $(V, \mu)$ is identically equal to a constant, or equivalently to say, $(V, \mu)$ is parabolic
	or recurrent.
	
	%Recall that a weighted graph $(V,\mu)$ is called {\it recurrent} if the random walk on $(V,\mu)$ with one-step transition probability $P(x,y)=\frac{\mu_{xy}}{\mu(x)}$ is recurrent; otherwise, $(V,\mu)$ is called {\it transient}. It is known that $(V,\mu)$ is recurrent if and only if it is {\it parabolic}, i.e. any nonnegative superharmonic function (a function $u$ is called superharmonic if it satisfies $\Delta u\leq0$  on $V$) is identically equal to a constant (see for example \cite{G}).
	
	The notion of parabolicity of graph can be regarded as a generalization of the parabolicity of manifolds,
	see Cheng-Yau's paper \cite{CY}, and  Grigor'yan \cite{G85}, Karp\cite{K}, Varopoulos \cite{V} for further developments.

	Recently, Gu, Sun, and Huang in \cite{GSH} proved that, for $p>1$, if condition $(p_0)$ is satisfied, and if
	\begin{eqnarray}
		\mu(B(o,n))\lesssim n^{\frac{2p}{p-1}}(\ln n)^{\frac{1}{p-1}},
	\end{eqnarray}
	then $\Delta u+u^p\leq0$ admits no positive solution. While for $0<p\leq1$, $\Delta u+u^p\leq0$ admits no positive solution.
	Here the exponent $\frac{2p}{p-1}$ and $\frac{1}{p-1}$ are also showed to be sharp when $p>1$.
	
Gu-Sun-Huang's result can be considered as a discrete version obtained by Grigor'yan-Sun on manifold case in \cite{GS1}, where they proved that
\begin{eqnarray*}
\mu(B(o,r))\lesssim r^{\frac{2p}{p-1}}(\ln r)^{\frac{1}{p-1}}, \quad\mbox{for all large enough $r$},
\end{eqnarray*}
then there exists no nonnegative solution to $\Delta u+u^p\leq0$ on geodesically complete noncompact manifolds.	Here $\mu$ is Riemannian measure
on manifolds.

	Motivated by these results, we would like to study problem (\ref{ieq}) involving
	gradient terms on weighted graphs. To express our classification more clearly, let us divide $R^2$
	into six parts (Figure \ref{fig1}).
	\begin{align*}
		&G_1=\{(p,q)|p\geq 0,1-p<q<2\},\quad G_2=\{(p,q)|q\geq 2\} \\
		&G_3=\{(p,q)|p<0,1<q<2\}, \qquad \quad G_4=\{(p,q)|p<0,q=1 \}\\
		&G_5=\{(p,q)|p+q=1,p\geq0,q>0,\;\text{or}\;p+q=1,q<0\}\\
		&G_6=\{(p,q)|p<1-q,q<1,\;\text{or}\; (p,q)=(1,0)\}\\
	\end{align*}

	\begin{figure}[h]
		\begin{tikzpicture}[x={(0.8cm,0cm)},y={(0cm,0.8cm)}]\label{fig1}
			\draw[->] (-6,0)--(6,0) ;
			\draw[->] (0,-4)--(0,6);
			\fill[green,opacity=0.7] (-6,6)--(6,6)--(6,4)--(-6,4) ;
			\fill[blue,opacity=0.7] (0,4)--(6,4)--(6,-4)--(0,2) ;
			\fill[yellow,opacity=0.7] (0,4)--(-6,4)--(-6,2)--(0,2);
			\fill[red,opacity=0.7] (0,2)--(-6,2)--(-6,-4)--(6,-4);
			\draw[very thick,dashed]  (-6,2)--(0,2);
			\draw[very thick,dotted]  (6,-4)--(2,0) (2,0)--(0,2);
			\fill[blue,opacity=0.7] (7.7,5.4) rectangle(8.3,4.8);
			\fill[green,opacity=0.7] (7.7,3.6) rectangle(8.3,3);
			\fill[yellow,opacity=0.7] (7.7,1.8) rectangle(8.3,1.2);
			\fill[red,opacity=0.7] (7.7,0) rectangle(8.3,-0.6);
			\fill[red,opacity=1] (2,0) circle(0.05);
			\draw[dashed]  (7.5,-1.9)--(8.5,-1.9);
			\draw[dotted]  (7.5,-3.3)--(8.5,-3.3);
			\node[above] at (0,6) {$q$};
			\node[right] at (6,0) {$p$};
			\node[below] at (-5,4) {\tiny{$q=2$}};
			\node[below] at (-5,2) {\tiny{$q=1$}};
			\node[below] at (1.7,0) {\tiny{$(1,0)$}};
			\node[below] at (3,-1.6) {\tiny{$p+q=1$}};
			\node[right] at (8.3,5.1) {\small{$G_1$}};
			\node[right] at (8.3,3.3) {\small{$G_2$}};
			\node[right] at (8.3,1.6) {\small{$G_3$}};
			\node[right] at (8.3,-0.3) {\small{$G_6$}};
			\node[right] at (8.5,-1.9) {\small{$G_4$}};
			\node[right] at (8.5,-3.3) {\small{$G_5$}};
		\end{tikzpicture}
	\caption{}
	\end{figure}

	Our main results are as follows:
	\begin{theorem}\label{thm1} \rm{
			Let $G=(V,E, \mu)$ be an infinite, connected, locally finite graph on which  condition $(p_0)$
			is satisfied. Fix some $o\in V$.
			\begin{enumerate}
				\item[(I).]{Assume $(p,q)\in G_1$. If
					\begin{align}\label{vol-1}
						\mu(B(o,n)) \lesssim n^{\frac{2p+q}{p+q-1}}(\ln{n})^{\frac{1}{p+q-1}},
						\quad\mbox{for all $n>>1$},
					\end{align}
					then (\ref{ieq}) admits no nontrivial positive solution.}
				
				\item[(II).]{Assume $(p,q)\in G_2$. If
					\begin{align}\label{vol-2}
						\mu(B(o,n)) \lesssim n^{2}\ln{n},\quad \mbox{for all $n>>1$},
					\end{align}
					then (\ref{ieq}) admits no nontrivial positive solution.}
				
				\item[(III).]{Assume $(p,q)\in G_3$. If
					\begin{align}\label{vol-3}
						\mu(B(o,n)) \lesssim n^{\frac{q}{q-1}}(\ln{n})^{\frac{1}{q-1}},\quad \mbox{for all $n>>1$},
					\end{align}
					then (\ref{ieq}) admits no nontrivial positive solution.}
				
				\item[(IV).]{Assume $(p,q)\in G_4$.  For any given $\alpha>0$, if
					\begin{align}\label{vol-4}
						\mu(B(o,n)) \lesssim n^{\alpha}, \quad \mbox{for all $n>>1$},
					\end{align}
					then (\ref{ieq}) admits no nontrivial positive solution.}
				
				\item[(V).]{Assume $(p,q)\in G_5$. There exists some $k_0>0$, for any given $\kappa$ satisfying $0<\kappa<k_0$, if
					\begin{align}\label{vol-5}
						\mu(B(o,n)) \lesssim e^{\kappa n},\quad \mbox{for all $n>>1$},
					\end{align}
					then (\ref{ieq})  admits no nontrivial positive solution.}
				
				%\item[(VI).]{Assume $(p,q)\in G_6$. Then (\ref{ieq}) admits no nontrivial positive solution.}
		\end{enumerate}}
	\end{theorem}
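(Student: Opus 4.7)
The overall strategy I plan to use is the nonlinear capacity / test-function method, adapted to the discrete setting, in the spirit of Gu-Sun-Huang \cite{GSH} and the manifold argument of Grigor'yan-Sun \cite{GS1}, but modified to accommodate the gradient term $|\nabla u|^q$. In each of the five cases I first suppose, for contradiction, that a nontrivial positive solution $u$ to (\ref{ieq}) exists on $G$. I then test (\ref{ieq}) against a carefully chosen function of the form $\varphi=\eta^{s} u^{-\beta}$ (or, in the regions where $u^p$ with $p<0$ is delicate, against $\eta^{s} F(u)$ for a suitable monotone $F$), where $\eta$ is a Lipschitz cutoff supported in a ball $B(o,2R)$ and equal to $1$ on $B(o,R)$, and $s,\beta$ are chosen depending on $(p,q)$. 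Summing the resulting inequality against the vertex measure $\mu$ and performing a discrete summation by parts yields a Caccioppoli-type estimate of the form
\begin{equation*}
\sum_{x\in B(o,R)} u^{p-\beta}(x)|\nabla u(x)|^{q}\mu(x) \;\leq\; C\sum_{x\in B(o,2R)\setminus B(o,R)} u^{-\beta+\sigma}(x) \bigl(\text{terms involving }\nabla\eta\bigr)\mu(x),
\end{equation*}
after which Young's (or Hölder's) inequality is applied in a way that depends on $(p,q)$ to split the boundary contribution into a fraction of the main term plus a purely geometric remainder.

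The exponents $\beta$ and $s$ are then forced by scaling: in case (I) one finds $\beta=\tfrac{p+q-1}{\cdot}$ so that the residual term is exactly of the form $R^{-(2p+q)/(p+q-1)}\mu(B(o,2R))\,(\log R)^{-1/(p+q-1)}$, which is controlled by (\ref{vol-1}); in case (II) the inequality $q\geq 2$ together with condition $(p_0)$ lets me bound $u^p|\nabla u|^q \geq c\,u^p|\nabla u|^2$ from below on edges where $u$ does not change too fast, reducing matters to a quadratic gradient estimate and producing the $n^2\log n$ threshold; in case (III), since $p<0$, I cannot afford a negative power of $u$ on the right, so I instead substitute $v=\log u$ (after verifying a discrete chain-rule surrogate using condition $(p_0)$) which turns $\Delta u +u^p|\nabla u|^q\leq 0$ into a differential inequality of the form $\Delta v + c|\nabla v|^q\leq 0$, and then the threshold $q/(q-1)$ arises in exactly the same way as the classical result for $\Delta v+|\nabla v|^q\leq 0$. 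Case (IV) with $q=1$, $p<0$ follows by the same logarithmic substitution, but now the resulting linear inequality $\Delta v+c|\nabla v|\leq 0$ admits the Phragmén-Lindelöf-style conclusion that any polynomial volume growth suffices.

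For the most delicate case (V), where $p+q=1$, the scaling argument that produced a polynomial threshold in (I)-(III) degenerates. Here I plan to use an exponential test function of the form $\varphi=\eta^{s}\exp(-\beta u^{\alpha})$ or, after the $v=\log u$ substitution, $\varphi=\eta^{s} e^{-\beta v}$, with $\beta$ calibrated to the exponent $p+q=1$. The Caccioppoli estimate then produces a geometric term $e^{-\kappa R}\mu(B(o,2R))$ rather than a polynomial one, which explains both the exponential volume condition (\ref{vol-5}) and the threshold $k_0$ (determined by the Young-inequality constants together with $p_0$).

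The main obstacle I anticipate is the handling of the nonlocal, nonsmooth gradient $|\nabla u|$ on the graph: there is no genuine chain rule, and the sum-by-parts identity for $\Delta u$ only gives $\sum \Delta u \cdot \varphi \mu = -\sum_{x\sim y}\mu_{xy}(u(y)-u(x))(\varphi(y)-\varphi(x))/2$, which couples the values of $u$ at two endpoints. To obtain the correct exponents, I need pointwise comparisons such as
\begin{equation*}
|u(y)^\alpha-u(x)^\alpha| \;\leq\; C(\alpha,p_0)\bigl(u(y)\vee u(x)\bigr)^{\alpha-1}|u(y)-u(x)|,
\end{equation*}
together with edge-wise lower bounds that exploit $(p_0)$ to pass from $|\nabla u|^q$ to edge differences $|u(y)-u(x)|^q$; these are routine but must be tracked case by case to recover the sharp logarithmic corrections $(\ln n)^{1/(p+q-1)}$, $(\ln n)^{1/(q-1)}$, and $\ln n$ in (I)-(III). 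Once these discrete analytic lemmas are in place, all five parts reduce to a unified Caccioppoli-plus-volume-growth contradiction argument.
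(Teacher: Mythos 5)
Your overall framework (test (\ref{ieq}) against $\varphi=\eta^{s}u^{-\beta}$, sum by parts, apply Young/H\"older, and compare with the volume growth) is indeed the skeleton of the paper's proof of part (I), and your anticipated discrete lemmas (the edge-wise bound $|\nabla_{xy}u|\leq\sqrt{2p_0}|\nabla u(x)|$ and the mean-value comparisons for $u^{-t}$, $\varphi^{s}$ under condition $(p_0)$) are exactly the ones the paper proves. However, there is a decisive missing idea: with a \emph{fixed} exponent $\beta$ and a \emph{single} cutoff on $B(o,2R)\setminus B(o,R)$, this scheme only yields nonexistence under $\mu(B(o,n))\lesssim n^{(2p+q)/(p+q-1)}$ \emph{without} the logarithmic factor. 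The sharp corrections $(\ln n)^{1/(p+q-1)}$, $(\ln n)^{1/(q-1)}$, $\ln n$ in (I)--(III) do not come from tracking constants in the discrete chain-rule surrogates, as you suggest; they come from a two-parameter limiting argument in which the test-function exponent is tied to the scale (the paper takes $t=1/i$, $t=1\mp 1/i$, $t=-\tfrac{p}{q-1}\pm\tfrac1i$ respectively) and the cutoff is the dyadic average $\varphi_i=\tfrac1i\sum_{k=i-1}^{2i-2}h_{2^k}$, whose gradient gains an extra factor $1/i$ on each annulus. Without this your argument cannot reach the stated thresholds.

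There are also concrete problems in the individual cases. In (II), the claimed bound $u^{p}|\nabla u|^{q}\geq c\,u^{p}|\nabla u|^{2}$ for $q\geq2$ reverses wherever $|\nabla u|<1$, so the reduction to a quadratic gradient estimate fails (the clean route here is Nash--Williams parabolicity, which you do not invoke; the paper instead runs the capacity argument with $t=1\mp1/i$ and, on the borderline $p+q=1$, a normalization $v=u/k$ on sublevel sets together with a perturbation $p\mapsto p+\epsilon$). In (III) and (IV), the substitution $v=\log u$ gives, even granting a discrete chain rule, $\Delta v+u^{p+q-1}|\nabla v|^{q}\leq0$ (resp.\ $\Delta v+u^{p}|\nabla v|\leq0$), and since $u$ is neither bounded above nor below a priori, the weight does not reduce to a constant; so the appeal to the classical result for $\Delta v+|\nabla v|^{q}\leq0$ is unjustified. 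The paper avoids this by choosing $t$ near $-p/(q-1)$ so that the $u$-powers cancel inside the H\"older balance. Finally, for (V) your exponential test function is a plausible but entirely different (and unexecuted) strategy; the paper instead extracts the pointwise bound $u^{p-1}|\nabla u|^{q}\leq1$ directly from the equation, localizes to sublevel sets $\{u<k\}$ (which is why its Caccioppoli lemma is proved on subsets $\Omega\subsetneq V$ with a boundary sign condition), and couples a H\"older exponent $\lambda\to1_+$ (or $\epsilon\to0$) with the radius via $z=\theta n$ to produce the explicit threshold $k_0$. As written, the proposal has genuine gaps in all of (II)--(V) and misses the mechanism for sharpness in (I).
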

	\begin{remark}
		In Theorem \ref{thm1} (II), by Nash-Williams's test, (\ref{vol-2}) condition can be relaxed to (\ref{votest}).
	\end{remark}

	\textbf{Notations.} In the above and below, the letters $C,C',C_0,C_1,c_0,c_1$... denote positive constants whose values are unimportant and may vary at different occurrences. $A\lesssim B$ means that the quotient of $A$ and $B$ is bounded from the above, $A\gtrsim B$ means that the quotient of $A$ and $B$ is bounded from below, and $A\asymp B$ means both $A\lesssim B$
	and $A\gtrsim B$ hold.
	
\vskip1ex

For $(p,q)\in G_6$, we have the following nonexistence result.
\begin{theorem}\label{thm1-1} \rm{
Let $G=(V,E, \mu)$ be an infinite, connected, locally finite graph.
Under any of the following two assumptions
\begin{enumerate}
\item[(1).]
{Assume $(p_0)$ condition is satisfied, and $p<1-q, q<0$; }
\item[(2).]{If either $p<1-q, 0\leq q<1$ or $(p,q)=(1,0)$;}
\end{enumerate}
then (\ref{ieq}) admits no nontrivial positive solution.}
\end{theorem}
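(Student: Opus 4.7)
The overarching strategy for Theorem~\ref{thm1-1} is to exploit the subcritical nature of $G_6$ (where $p+q<1$ except at the boundary point $(1,0)$) through a discrete Cauchy--Schwarz inequality coupled with a minimum-principle argument, obviating any volume growth assumption. The isolated point $(p,q)=(1,0)$ is settled in one line: $\Delta u + u \le 0$ unpacks to $\sum_{y\sim x}\frac{\mu_{xy}}{\mu(x)}u(y) \le 0$, which is impossible when $u>0$. For the rest, fix a putative nontrivial positive solution $u$; then $u$ is superharmonic, and the pointwise identity
\[
(-\Delta u(x))^2 = \Bigl(\sum_{y\sim x}\tfrac{\mu_{xy}}{\mu(x)}(u(x)-u(y))\Bigr)^2 \leq 2|\nabla u(x)|^2
\]
(obtained by Cauchy--Schwarz against the probability weights $\mu_{xy}/\mu(x)$) combined with $u^p|\nabla u|^q \le -\Delta u$ gives the key pointwise estimate
\[
|\nabla u(x)|^{1-q} \;\geq\; \frac{u(x)^p}{\sqrt 2}
\]
at every vertex with $|\nabla u(x)|>0$. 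When $q<0$ (case (1)), the original inequality further forces $|\nabla u|>0$ everywhere, so this bound is global.

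For case (1), condition $(p_0)$ and superharmonicity imply the companion estimate $|\nabla u(x)|\le C_{p_0}\,u(x)$ via the elementary bound $u(y)\le p_0\,u(x)$ for $y\sim x$. Inserting this into the previous inequality and using $p+q<1$ with $q<1$ reduces the two bounds to $u(x)^{(p+q-1)/(1-q)}\lesssim 1$, an inequality with negative exponent that forces a uniform lower bound $u\ge c_0>0$ on $V$. Feeding $u\ge c_0$ back into the differential inequality and, if necessary, iterating between $|\nabla u|\le C_{p_0}u$ and $|\nabla u|\ge c\,u^{p/(1-q)}$ yields a uniform lower bound $-\Delta u\ge c_\star>0$. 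This is incompatible with $u>0$ on any infinite graph: along an infimizing sequence $x_n$ with $u(x_n)\to \inf u$, the estimate $\sum_{y\sim x_n}\frac{\mu_{x_ny}}{\mu(x_n)}u(y)\le u(x_n)-c_\star$ eventually violates $u(y)\ge \inf u$, producing the desired contradiction.

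For case (2), where $(p_0)$ is \emph{not} assumed and $0\le q<1$, the argument proceeds via a minimum analysis. If the infimum of $u$ is attained at some $x_0$, the interior-minimum property forces $\Delta u(x_0)\ge 0$, while $\Delta u(x_0)\le -u(x_0)^p|\nabla u(x_0)|^q$; unless $|\nabla u(x_0)|=0$ one gets a strict inequality $\Delta u(x_0)<0$, contradicting the minimum condition, and $|\nabla u(x_0)|=0$ propagates by connectedness to force $u$ constant, which is trivial for the parameters under consideration. If the infimum is not attained, one works along a sequence $x_n$ with $u(x_n)\to\inf u$: the superharmonicity bound $\Delta u(x_n)\ge \inf u-u(x_n)\to 0$ combined with the differential inequality pins down $|\nabla u(x_n)|\to 0$, after which the subcritical relation $|\nabla u|^{1-q}\ge u^p/\sqrt 2$ forces a contradiction along the sequence (through $u^p\to\infty$ when $p<0$, and by specialization to the known $\Delta u + u^p\le 0$ nonexistence result of Gu--Sun--Huang when $p\ge 0$ and $q=0$).

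The principal obstacle is case (1) with $p+q<0$, where the uniform lower bound $u\ge c_0$ does not immediately upgrade to a uniform lower bound on $u^p|\nabla u|^q$, since $u^{p+q}$ may decay if $u$ is unbounded above. Threading the bootstrap loop---alternating $|\nabla u|\le C_{p_0}u$ with $|\nabla u|\ge c\,u^{p/(1-q)}$ to squeeze a contradiction without any volume hypothesis---is the delicate technical point, and the whole argument's avoidance of volume growth assumptions is precisely what distinguishes Theorem~\ref{thm1-1} from Theorem~\ref{thm1}.
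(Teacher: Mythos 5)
Your central inequality $|\Delta u(x)|\le\sqrt2\,|\nabla u(x)|$, hence $u(x)^p\le\sqrt2\,|\nabla u(x)|^{1-q}$ wherever $|\nabla u(x)|\ne0$, is exactly the paper's key estimate, and your treatment of $(p,q)=(1,0)$ and of case (1) is essentially sound. Indeed, the ``bootstrap'' you flag as the principal obstacle in case (1) is not needed: once $(p_0)$ and superharmonicity give $|\nabla u|\le C u$ and hence $u\ge c_0>0$, take any infimizing sequence $u(x_n)\to m$ with $c_0\le m<\infty$; then $m-u(x_n)\le\Delta u(x_n)\le -u(x_n)^p|\nabla u(x_n)|^q\le -C^{q}u(x_n)^{p+q}\to -C^{q}m^{p+q}<0$, regardless of the sign of $p+q$, while the left side tends to $0$. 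The contradiction closes without any upper bound on $u$.

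The genuine gap is in case (2) on the range $p\ge0$, $0<q<1$, $p+q<1$ (e.g.\ $p=0.3$, $q=0.5$), which your sketch does not cover. If the infimum $m$ of $u$ is not attained and $m=0$ with $p>0$, then $u(x_n)^p|\nabla u(x_n)|^q\to0$ does not ``pin down'' $|\nabla u(x_n)|\to0$, because $u(x_n)^p\to0$ already; and even granting $|\nabla u(x_n)|\to0$, the relation $|\nabla u|^{1-q}\ge u^p/\sqrt2$ is not violated, since its right-hand side also tends to $0$. Your two escape routes ($u^p\to\infty$ when $p<0$; citing Gu--Sun--Huang when $q=0$) leave exactly this region untouched. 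The paper closes it by a different device: it constructs the specific descending sequence $u(x_{i+1})=\min_{y\sim x_i}u(y)$ starting from a vertex with $|\nabla u(x_0)|\ne0$, which guarantees $|\nabla u(x_i)|\ne0$, $\Delta u(x_i)<0$ and $u(x_i)-u(x_{i+1})\to0$ (none of which hold for a generic infimizing sequence), and then runs a dichotomy on whether $|\nabla u(x_i)|\le\lambda\,(u(x_i)-u(x_{i+1}))$ eventually holds, with threshold $\lambda=u(x_0)^{(1-p-q)/q}$; each horn produces a contradiction using $p<1-q$. Some such additional argument is required, so the proposal as written is incomplete precisely on this subregion of $G_6$.
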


\begin{remark}\rm{
Let us compare our results with the one obtained by Sun-Xiao-Xu on manifolds in \cite{SXX}.
On manifolds, $(p,q)\in\mathbb{R}^2$ are also divided into six parts:
\begin{eqnarray*}
&G_1^{\prime}=\{p\geq0,1-p<q<2\}, \quad &G_2^{\prime}=\{q\geq2\},\\
&G_3^{\prime}=\{p<0, 1<q<2\},\quad & G_4^{\prime}=\{p<0, q=1\},\\
&G_5^{\prime}=\{p=1-q, q\leq1\},\quad &G_6^{\prime}=\{p<1-q, q<1\}.
\end{eqnarray*}
Clearly, $G_i=G_i^{\prime}$ for $i=1,2,3,4$, $G_5=G_5^{\prime}\setminus(1,0)$, $G_6=G_6^{\prime}\cup (1, 0)$.
When  $(p, q)\in G_i$ for $i=1,2,3,4,5$,
the graph behaves quite similarly as manifolds, namely, under the volume growth of
the same forms, the nonexistence results hold. However, when $(p,q)\in G_6^{\prime}$, there exists some sharp exponentially volume growth, under which the nonexistence results hold, while on weighted graphs, there exists no such sharp volume growth to suffice nonexistence results.
Hence, it is very interesting to point out that the results on weighted graphs does not need to be accordance with these obtained on manifolds.

}
\end{remark}	
\vskip1ex
%\begin{remark}
%In the proof of Theorem \ref{thm1-1}, we only involve $ (p_0) $ condition when $q<0$. Hence  when $0\leq q<1$, The same conclusion in Theorem \ref{thm1-1} still holds without $ (p_0) $ condition.
%\end{remark}

	We also show that our volume growth conditions in Theorem \ref{thm1} are sharp, roughly speaking, if the volume conditions in Theorem \ref{thm1} are relaxed, then there exist a graph which admits a nontrivial positive solution to (\ref{ieq}). More precisely, we show these sharpness on the so-called homogeneous tree $T_N$.
	
	Fix $N\geq2$, recall that a connected
	graph $(V, E)$ is a tree if for any two distinct points $x, y \in V $, there is only one path between x and y. A homogeneous tree $T_N$ is a tree whose all vertices share the same degree $N$. The following existence result is obtained on $T_N$.
%Without loss of generality, let $o\in T_N$ be the root of the tree.
	\begin{theorem}\label{thm2} \rm{
			Let $(V,E)=T_N$.
			\begin{enumerate}
				\item[(I).]{Assume $(p,q)\in G_1$. For any arbitrary small $\epsilon>0$, there exists a weight $\mu$ on $T_N$ satisfying
					\begin{align}\label{e-vol-1}
						\mu(B(o,n)) \asymp n^{\frac{2p+q}{p+q-1}}(\ln{n})^{\frac{1}{p+q-1}+\epsilon},
						\quad\mbox{for $n\geq 2$,}
					\end{align}
					such that (\ref{ieq}) admits a nontrivial positive solution on $(V, \mu)$.}
				
				\item[(II).]{Assume $(p,q)\in G_2$. For any arbitrary small $\epsilon>0$, there exists a weight $\mu$ on $T_N$ satisfying
					\begin{align}\label{e-vol-2}
						\mu(B(o,n)) \asymp n^{2}(\ln{n})^{1+\epsilon},\quad \mbox{for $n\geq 2$,}
					\end{align}
					such that (\ref{ieq}) admits a nontrivial positive solution on $(V, \mu)$.}
				
				\item[(III).]{Assume $(p,q)\in G_3$. For any arbitrary small $\epsilon>0$, there exists a weight $\mu$ on $T_N$ satisfying
					\begin{align}\label{e-vol-3}
						\mu(B(o,n)) \asymp n^{\frac{q}{q-1}}(\ln{n})^{\frac{1}{q-1}+\epsilon},\quad \mbox{for $n\geq 2$,}
					\end{align}
					such that (\ref{ieq}) admits a nontrivial positive solution on $(V, \mu)$.}
				
				\item[(IV).]{Assume $(p,q)\in G_4$.  Given $\lambda>0$, there exists a weight $\mu$ on $T_N$ satisfying
					\begin{align}\label{e-vol-4}
						\mu(B(o,n)) \asymp e^{\lambda n}, \quad \mbox{for $n\geq 2$,}
					\end{align}
					such that (\ref{ieq}) admits a nontrivial positive solution on $(V, \mu)$,}
				
				\item[(V).]{Assume $(p,q)\in G_5$. Then there exist a weight $\mu$ on $T_N$, and a positive constant $\lambda$ satisfying
					\begin{align}\label{e-vol-5}
						\mu(B(o,n)) \asymp e^{\lambda n}\qquad \mbox{for $n\geq 2$,}
					\end{align}
					such that (\ref{ieq}) admits a nontrivial positive solution on $(V, \mu)$.}
		\end{enumerate}}
	\end{theorem}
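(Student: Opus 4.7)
The plan is to build everything radially on $T_N$. Fix a root $o$; at depth $n\ge 1$ each vertex has one predecessor and $N-1$ successors. I seek the solution as $u(x)=f(d(o,x))$ for a positive, decreasing sequence $f$, and take the edge weight $\mu_{xy}=b_{\max(d(o,x),d(o,y))}$ for a positive sequence $\{b_n\}$. A direct computation then yields, at a vertex $x$ of depth $n\ge 1$,
\begin{align*}
\mu(x)&=b_n+(N-1)b_{n+1},\\
\Delta u(x)&=\frac{b_n(f(n-1)-f(n))-(N-1)b_{n+1}(f(n)-f(n+1))}{\mu(x)},\\
|\nabla u(x)|^2&=\frac{b_n(f(n-1)-f(n))^2+(N-1)b_{n+1}(f(n)-f(n+1))^2}{2\mu(x)},
\end{align*}
while $\mu(B(o,n))=Nb_1+\sum_{k=1}^{n}N(N-1)^{k-1}(b_k+(N-1)b_{k+1})$. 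Thus the whole construction reduces to choosing the one-dimensional sequences $(f,b_n)$.

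The key observation is that to obtain sub-exponential volumes on $T_N$ one must take $b_n \asymp (N-1)^{-n}w_n$ for a slowly-varying positive sequence $w_n$: then $\mu(x)\asymp (N-1)^{-n}w_n$ at depth $n$, and $\mu(B(o,n))\asymp\sum_{k\le n}w_k$. This decouples volume growth (governed by $w_n$) from the pointwise inequality (governed by the ratios $b_n/b_{n+1}$ and the shape of $f$). Guided by the scale-invariant solution of the continuous model $f''+f^p|f'|^q=0$, namely $f(r)\asymp r^{(q-2)/(p+q-1)}$, for region $G_1$ I take
\[
f(n)=(n+2)^{-(2-q)/(p+q-1)}(\ln(n+2))^{-\delta},\qquad \delta=\tfrac{1}{p+q-1}+\epsilon,
\]
and tune $w_n$ so the leading terms in the discrete inequality cancel with positive slack, producing $\mu(B(o,n))\asymp n^{(2p+q)/(p+q-1)}(\ln n)^{1/(p+q-1)+\epsilon}$ as required. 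Cases (II) and (III) use respectively $f(n)=(\ln(n+2))^{-\delta}$ (balancing $\Delta u$ against $u^p|\nabla u|^q$ when $q\ge 2$, where the $u$-scaling becomes degenerate) and $f(n)=(n+2)^{-(2-q)/(q-1)}(\ln(n+2))^{-\delta}$ (where the gradient term is dominant since $p<0$); cases (IV)--(V) replace $f$ by a pure exponential $f(n)=e^{-\gamma n}$ and $w_n$ by an exponential as well, yielding $\mu(B(o,n))\asymp e^{\lambda n}$.

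The principal obstacle is showing, in the sub-exponential cases (I)--(III), that the discrete inequality actually holds once the scaling exponents have been heuristically read off from the continuous model. This amounts to checking that the discrete second difference of $f(n)=(n+2)^{-\alpha}(\ln(n+2))^{-\delta}$ matches its continuous counterpart up to strictly subleading remainders, and that those remainders are dominated by the leading negative term produced by the cancellation above. This is a careful but routine Taylor-expansion estimate. Once the inequality is verified for all $n\ge n_0$ with a positive slack, the finitely many values $b_1,\dots,b_{n_0}$ (together with the value at the root $o$, which is handled by direct computation) can be modified without affecting the asymptotic volume growth, producing the desired positive solution in each of the five regions and thus completing the proof of Theorem \ref{thm2}.
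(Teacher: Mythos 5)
Your radial reduction is exactly the paper's: the paper also takes $u$ and $\mu$ constant on spheres, writes $\mu_n\asymp (N-1)^{-n}w_n$, reduces (\ref{ieq}) to the scalar inequalities (\ref{en0})--(\ref{en}), and verifies them by Taylor expansion for $n+n_0$ large. However, two of your concrete ansatz choices would make that verification fail, and they are not repairable by routine estimates alone. First, in case (I) you place the extra $\epsilon$ in the logarithmic exponent of $f$, taking $f(n)\asymp A\,n^{-\sigma}(\ln n)^{-\beta-\epsilon}$ with $\sigma=\frac{2-q}{p+q-1}$, $\beta=\frac{1}{p+q-1}$, while the volume requirement (\ref{e-vol-1}) forces $w_n\asymp n^{\sigma+1}(\ln n)^{\beta+\epsilon}$. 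In the continuum model the radial Laplacian is $w^{-1}(wf')'$, and its leading negative contribution at order $r^{-\sigma-2}(\ln r)^{-\delta-1}$ carries the coefficient $-\sigma A(\beta'-\delta)$, where $\beta'$ is the log-exponent of $w$ and $\delta$ that of $f$; with your choice $\beta'=\delta=\beta+\epsilon$ this coefficient vanishes, the surviving negative part of $\Delta u$ is only $O\bigl(r^{-\sigma-2}(\ln r)^{-\beta-\epsilon-2}\bigr)$, whereas $u^p|\nabla u|^q\asymp A^{p+q}r^{-\sigma-2}(\ln r)^{-\beta-1-\epsilon(p+q)}$, and since $\beta+1+\epsilon(p+q)<\beta+\epsilon+2$ for small $\epsilon$ the positive term dominates for every amplitude $A$: the supersolution inequality fails at infinity. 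The paper makes the opposite choice — the $\epsilon$ sits in $\mu_n$ and \emph{not} in $u_n$ (see (\ref{mu1})--(\ref{u1})) — which produces the strictly negative slack proportional to $\epsilon$ that absorbs the nonlinearity once the amplitude $\delta$ is small; this is precisely the content of the limit $\lim_n\Lambda_1(n)=(2/\sigma)^{q/2-1}\epsilon$ in (\ref{3-1-lim}).

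Second, in cases (II) and (III) your profiles tend to $0$, but $p<0$ throughout $G_3$ (and on part of $G_2$), so $u^p\to\infty$ along your profile and the exponent balance changes: with $f\asymp n^{-(2-q)/(q-1)}$ one finds $u(x)^p|\nabla u(x)|^q\asymp n^{-\sigma-2+(2-q)|p|/(q-1)}\gg|\Delta u(x)|\asymp n^{-\sigma-2}$, so the inequality again fails for large $n$. The paper's solutions (\ref{u2}) and (\ref{u3}) contain an additive constant $+1$; the solution converges to a positive constant, $u^p$ stays bounded, and this is exactly why the sharp volume exponents $\frac{q}{q-1}$ and $\frac{1}{q-1}$ in (\ref{e-vol-3}) are independent of $p$. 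With these two corrections (and the small amplitude factor $\delta$ in front of the decaying part of the profile, which your sketch omits but which is what makes the $n=0$ inequality and the limiting inequality simultaneously satisfiable), your argument becomes the paper's proof.
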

	
	\section{Proof of Theorem \ref{thm1} and \ref{thm1-1}}
	Before proceeding to the proof of Theorem \ref{thm1}, we first introduce Lemma \ref{lem1}
and \ref{lem2}, which play important roles in proof of Theorem \ref{thm1}.
		\begin{lemma}\label{lem1}\rm{
			Let $(V,\mu)$ satisfies $(p_0)$ condition. If $u$ is a nonnegative solution to (\ref{ieq}), then either $u\equiv 0$ or $u>0$ and
			\begin{align}\label{ep}
				\frac{1}{p_0}\leq\frac{u(x)}{u(y)} \leq p_0, \quad\mbox{if $y\sim x$}.
		\end{align}}
	\end{lemma}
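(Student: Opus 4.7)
The plan is to exploit the superharmonicity of $u$ forced by the inequality, and then extract the two-sided neighbor bound from the uniform lower bound on edge weights given by condition $(p_0)$.

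First I would observe that, since $u\geq 0$ and both $u^p$ and $|\nabla u|^q$ are nonnegative wherever defined, the inequality $\Delta u+u^p|\nabla u|^q\le 0$ immediately implies $\Delta u(x)\le 0$ at every $x\in V$. Rewriting this using the definition \eqref{lap} and the identity $\sum_{y\sim x}\frac{\mu_{xy}}{\mu(x)}=1$ (which follows from $\mu(x)=\sum_{y\sim x}\mu_{xy}$) gives the discrete super-mean-value inequality
\[
\sum_{y\sim x}\frac{\mu_{xy}}{\mu(x)}\,u(y)\;\le\;u(x),\qquad\text{for every }x\in V.
\]

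Next, for the dichotomy, I would apply a minimum principle argument. Suppose $u(x_0)=0$ for some $x_0\in V$. Since $u\geq 0$, each summand $\frac{\mu_{x_0y}}{\mu(x_0)}u(y)$ in the above inequality at $x=x_0$ is nonnegative, so all of them must vanish. Because $\mu_{x_0y}>0$ for every $y\sim x_0$, this forces $u(y)=0$ for all neighbors $y$ of $x_0$. Iterating this along paths and using that $G$ is connected, I conclude $u\equiv 0$. The contrapositive yields $u>0$ on $V$ whenever $u\not\equiv 0$.

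Finally, assuming $u>0$ and picking any single neighbor $y_0\sim x$, I would drop all other (nonnegative) terms in the super-mean-value inequality to get $\frac{\mu_{xy_0}}{\mu(x)}u(y_0)\le u(x)$, and then use condition $(p_0)$ in the form $\frac{\mu(x)}{\mu_{xy_0}}\le p_0$ to conclude
\[
u(y_0)\;\le\;\frac{\mu(x)}{\mu_{xy_0}}\,u(x)\;\le\;p_0\,u(x).
\]
Since $\sim$ is symmetric and condition $(p_0)$ applies equally at the vertex $y_0$, running the same argument with the roles of $x$ and $y_0$ interchanged gives $u(x)\le p_0\,u(y_0)$. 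Combining the two inequalities produces $\tfrac{1}{p_0}\le u(x)/u(y_0)\le p_0$, which is \eqref{ep}.

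No step here is a serious obstacle; the whole argument is essentially a discrete minimum principle plus one line of weighted-averaging. The only point worth being careful about is the mild well-definedness issue for $u^p|\nabla u|^q$ when $u$ or $|\nabla u|$ vanishes and $p$ or $q$ is negative, but this is absorbed by the standing convention that $u$ being a solution means the inequality holds pointwise, and the dichotomy step is what actually rules out zeros of $u$ in the nontrivial case.
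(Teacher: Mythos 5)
Your argument is correct: superharmonicity of $u$ from the nonnegativity of the reaction term, the discrete minimum principle plus connectedness for the dichotomy, and the weighted-averaging step combined with condition $(p_0)$ (applied symmetrically at both endpoints of the edge) for \eqref{ep}. This is exactly the standard argument the paper invokes by deferring to \cite[Lemma 3.1]{GSH}, so no further comparison is needed.
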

	\begin{proof}
		The proof is similar to \cite[Lemma 3.1]{GSH}.
		%	Assume that there exists some point $x_0$ such that $u(x_0) = 0$, now we show that
		%		$u\equiv 0$
		%		By (\ref{ieq}), we have
		%		$$0\geq \sum\limits_{y \sim x_0} \frac{\mu_{x_0y}}{\mu(x_0)}(u(y)-u(x_0))+u(x_0)^p\left|\nabla u(x_0)\right|^q = \sum\limits_{y \sim x_0} \frac{\mu_{x_0y}}{\mu(x_0)}u(y)$$
		%		which implies $u(y) = 0$ for any $y \sim x$. This implies $u\equiv 0$ by the connectedness of V .
		%		
		%		Now assume $u\not\equiv 0$, we have $u >0 $ in V, then by (\ref{ieq})
		%		$$0\geq \sum\limits_{y \sim x} \frac{\mu_{xy}}{\mu(x)}(u(y)-u(x))+u(x)^p\left|\nabla u(x)\right|^q \geq \frac{\mu_{xy}}{\mu(x)}u(y)-u(x) $$ for any $y\sim x $, which imlpies
		%		$$\frac{1}{p_0}\leq\frac{u(x)}{u(y)} \leq p_0$$
		%		Exchanging x and y, we get the other side inequality and hence
		%		(\ref{ep}) holds.
	\end{proof}
In the following, for brevity, we denote by
 $$\nabla_{xy}f=f(y)-f(x),\quad \mbox{for $f\in \mathcal{l}(V)$}.$$

Let $\Omega$ be a non-empty subset of $V$, we say that $u$ satisfies
\begin{align}\label{vieq}
	\Delta u(x)
	+ u(x)^{p} |\nabla u(x)|^{q}\leq 0,\quad\mbox{ when $x\in \Omega $}.\end{align}
means that (\ref{vieq}) holds only for these vertices $x\in \Omega$, where $\Delta u$ and $|\nabla u|$ are still defined by
(\ref{lap}) and (\ref{gra}) respectively in $V$.
%specially, we take $\Omega=V$ in some parts of the next chapter.
%where $|\nabla_{\Omega} u(x)|=\sqrt{\sum\limits_{y\in \Omega}\frac{\mu_{xy}}{2\mu(x)}(\nabla_{xy}u)^2}$
%
%It is easy to see (\ref{vieq}) is equivalent to (\ref{ieq}) and $|\nabla_{\Omega} u(x)|=|\nabla u(x)|$ when $\Omega=V$.
%\begin{remark}
%	$|\nabla_{\Omega} u(x)|$ is not the norm of gradient of $ u $ in $\Omega$, since $\mu(x)$ there is still the measure of $x$ in $ V $, instead of in $\Omega$. Noticing $\Omega$ is a  subset of $V$, we have $\sum\limits_{y\in \Omega}\mu_{xy}\leq\sum\limits_{y\in V}\mu_{xy}=\mu(x)$.
%\end{remark}
\begin{lemma}\label{lem2}\rm{
		Assume $p+q\neq 1$,
 $(V,\mu)$ satisfies $(p_0)$ condition,
 and $\Omega$ is a non-empty subset of $V$.
		Let $u$ be a nontrivial positive function on $V$ which satisfies (\ref{vieq}), and  $\frac{1}{p_0}\leq\frac{u(x)}{u(y)} \leq p_0$ for any $y\sim x$. Furthermore, when $\Omega\not=V$, assume $u$ also satisfies $u(y)-u(x)\geq 0$ for any $(x,y) \in\{(x,y)|y\sim x, x\in\Omega\mbox{ and }y\in \Omega^c\}$.
		Then there exists a positive pair
		$(s,t)$ such that for any $0\leq\varphi\leq 1$ with compact support in $\Omega $, the following estimates hold:
		\begin{align}\label{est-1}
			&\sum\limits_{x \in \Omega }\mu(x)  u(x)^{p-t}|\nabla u(x)|^q\varphi(x)^s \nonumber\\
			\leq& C_{p_0,t} (2s)^{\frac{2p+q+t(q-2)}{p+q-1}} t^{- \frac{p+t(q-1)}{p+q-1}}
			\left(\sum_{\substack {x,y\in \Omega \\ \nabla_{xy} \varphi \neq 0}} \mu_{xy}\varphi(x)^s u(x)^{p-t}|\nabla u(x)|^q\right)^{\frac{1-t}{p+q-t}}\nonumber\\
			&\times\left(\sum\limits_{x,y\in \Omega } \mu_{xy} |\nabla_{xy}\varphi|^{\frac{2p+q+t(q-2)}{p+q-1}}\right)^{\frac{p+q-1}{p+q-t}},
		\end{align}
		and
		\begin{align}\label{est-2}
			\sum\limits_{x \in \Omega }\mu(x)  u(x)^{p-t} |\nabla u(x)|^q \varphi(x)^s
			\leq& ( C'_{p_0,t})^{\frac{p+q-t}{p+q-1}}  (2s)^{\frac{2p+q+t(q-2)}{p+q-1}} t^{-\frac{p+t(q-1)}{p+q-1}}\nonumber\\
			&\times\sum\limits_{x,y\in \Omega } \mu_{xy} |\nabla_{xy}\varphi|^{\frac{2p+q+t(q-2)}{p+q-1}},
		\end{align}
		where
		$C_{p_0,t}= \frac{(\sqrt{2p_0}(1+p^t_0))^{\frac{p + t(q-1)}{p + q-t}+1}   (p_0^{t+1})^{\frac{p + t(q-1)}{p + q-t}}}{4}$, $C'_{p_0,t} =( C_{p_0,t})^{\frac{p+q-t}{p+q-1}}$,
		and $s,t$ satisfy
		\begin{equation}\label{st-cond}
			\left\{
			\begin{array}{lr}
				\frac{2p + q + t(q-2)}{p + q-t} > 1, \\
				\frac{p+q-t}{1-t} > 1,\\
				s > \frac{2p+q+t(q-2)}{p+q-1}.
			\end{array}
			\right.
	\end{equation}}
\end{lemma}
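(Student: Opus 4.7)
The plan is to run the standard test-function (Caccioppoli-type) argument, adapted to the discrete setting. Take $\eta = u^{-t}\varphi^s$, which is non-negative and compactly supported in $\Omega$. Multiplying (\ref{vieq}) by $\mu(x)\eta(x)$, summing over $x\in V$, and invoking the discrete Green identity
\[
\sum_{x\in V}\mu(x)\Delta u(x)\,v(x) = -\tfrac{1}{2}\sum_{x,y\in V}\mu_{xy}\nabla_{xy}u\,\nabla_{xy}v
\]
for compactly supported $v$, the boundary piece $\sum_{x\in\Omega,\,y\in\Omega^c}\mu_{xy}(u(y)-u(x))\eta(x)$ that appears when $\Omega\neq V$ is non-negative by the hypothesis on $u$, so it can be dropped. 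This yields
\[
\sum_{x\in\Omega}\mu(x)u^{p-t}|\nabla u|^q\varphi^s \leq \tfrac{1}{2}\sum_{x,y\in\Omega}\mu_{xy}\nabla_{xy}u\cdot\nabla_{xy}(u^{-t}\varphi^s).
\]

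Expanding $\nabla_{xy}(u^{-t}\varphi^s)$ via the discrete Leibniz rule into $\nabla_{xy}(u^{-t})\varphi^s(y)+u^{-t}(x)\nabla_{xy}(\varphi^s)$ splits the right-hand side into two pieces. The first, $\mu_{xy}\nabla_{xy}u\cdot\nabla_{xy}(u^{-t})\varphi^s(y)$, is pointwise $\leq 0$ because $u\mapsto u^{-t}$ is decreasing for $t>0$, so I discard it. The remaining ``cross'' term is handled via $|\nabla_{xy}(\varphi^s)|\leq s\max(\varphi(x),\varphi(y))^{s-1}|\nabla_{xy}\varphi|$, combined with the two-sided Harnack-type bound $1/p_0\leq u(x)/u(y)\leq p_0$, which lets me trade $u^{-t}(x)$ for $u^{-t}(y)$ up to a factor $p_0^{t}$ and produces the $(1+p_0^t)$-type combinations in $C_{p_0,t}$. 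A Cauchy--Schwarz step on the inner sum $\sum_{y\sim x}\mu_{xy}|\nabla_{xy}u|\cdot(\ldots)$ then converts the edge quantity $|\nabla_{xy}u|$ into $\sqrt{2\mu(x)}|\nabla u(x)|$, and condition $(p_0)$ absorbs the remaining $\mu(x)/\mu_{xy}$-ratios.

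The analytic heart is a H\"older split with the conjugate exponents $\theta_1=\frac{1-t}{p+q-t}$ and $\theta_2=\frac{p+q-1}{p+q-t}$ (they sum to $1$, and the hypothesis $p+q\neq 1$ is what keeps them finite). These are chosen so that one factor regroups into $\bigl(\sum_{\nabla_{xy}\varphi\neq 0}\mu_{xy}\varphi^s u^{p-t}|\nabla u|^q\bigr)^{\theta_1}$, mirroring the LHS along the support of $\nabla\varphi$, while the other produces $\bigl(\sum_{x,y\in\Omega}\mu_{xy}|\nabla_{xy}\varphi|^{\alpha}\bigr)^{\theta_2}$ with $\alpha=\frac{2p+q+t(q-2)}{p+q-1}$; this exponent is forced by simultaneously matching the $u$-, $\varphi$-, and $|\nabla u|$-scalings of the two factors to the target integrand. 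The outcome is (\ref{est-1}).

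To pass to (\ref{est-2}), apply Young's inequality to the product $A^{\theta_1}B^{\theta_2}$ with conjugate exponents $\frac{p+q-t}{1-t}$ and $\frac{p+q-t}{p+q-1}$, the first being $>1$ by the second condition of (\ref{st-cond}). Using $\mu_{xy}\leq\mu(x)$ together with $(p_0)$ (which makes $\mu_{xy}$ and $\mu(x)$ comparable), the small multiple of $A$ that emerges can be absorbed into the LHS; the third condition $s>\alpha$ guarantees that $\varphi^{s}$ dominates the cutoff-derivative factors so the absorption is clean. \textbf{Main obstacle.} The difficulty lies not in any single step but in the exponent bookkeeping: the H\"older pair $(\theta_1,\theta_2)$, the $|\nabla\varphi|$-exponent $\alpha$, and the Young pair used for absorption are all simultaneously forced by demanding that the only two nontrivial factors on the right match, respectively, $\mu_{xy}\varphi^s u^{p-t}|\nabla u|^q$ on the LHS and a pure $|\nabla\varphi|$-sum; each inequality in (\ref{st-cond}) corresponds to one of these exponent assignments being admissible, and the peculiar form of $\alpha$ is the unique solution to this system.
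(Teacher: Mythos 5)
Your overall architecture (test function $u^{-t}\varphi^s$, summation by parts, using the sign of the boundary sum over $x\in\Omega$, $y\in\Omega^c$, the Harnack-type bound $1/p_0\le u(x)/u(y)\le p_0$ along edges, and condition $(p_0)$ to pass between $\mu_{xy}$ and $\mu(x)$) is the paper's, but there is a genuine gap at what you call the analytic heart. You discard the piece $\sum\mu_{xy}\,\nabla_{xy}u\cdot\nabla_{xy}(u^{-t})\,\varphi^s$ because it has a favourable sign. The paper instead keeps it: moved to the left-hand side it becomes the energy term $\tfrac{t}{p_0^{t+1}}\sum_{x\in\Omega}\mu(x)u(x)^{-t-1}|\nabla u(x)|^2\varphi(x)^s$ (and the boundary sum is likewise retained, not dropped, precisely to complete $\sum_{y\in\Omega}\mu_{xy}(\nabla_{xy}u)^2$ to the full $\sum_{y\in V}$ hidden in $|\nabla u(x)|^2$). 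This term is not a luxury; it is what makes the next step possible.

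The decisive problem is that the two-factor H\"older split you propose does not balance. After your reductions the cross term is $\sum\mu_{xy}\,u(x)^{-t}\varphi(x)^{s-1}|\nabla u(x)|\,|\nabla_{xy}\varphi|$; writing its summand as $f\cdot g$ with $f^{1/\theta_1}=\mu_{xy}\varphi^s u^{p-t}|\nabla u|^q$ forces $(p-t)\theta_1=-t$ and $q\theta_1=1$, and with $\theta_1=\frac{1-t}{p+q-t}$ both conditions reduce to $t(1-q)=p$, i.e.\ they hold only at the single value $t=-p/(q-1)$ --- not for the choices $t=1/i$, $t=1\pm 1/i$, $t=-p/(q-1)\pm 1/i$ that the proofs of Theorem \ref{thm1} actually need. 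The cross term scales like $u^{-t}|\nabla u|$ while the target scales like $u^{p-t}|\nabla u|^q$, and the only bridge between them is the third quantity $u^{-t-1}|\nabla u|^2$. That is exactly the paper's route: a Young step with exponents $a=\frac{2p+q+t(q-2)}{p+q-t}$ and $b=\frac{2p+q+t(q-2)}{p+t(q-1)}$ (so $1/a+1/b=1$) splits the cross term into a small multiple of the energy term --- absorbed by the term you threw away --- plus $\sum\mu_{xy}u^{-t+a-1}|\nabla u|^{2-a}\varphi^{s-a}|\nabla_{xy}\varphi|^a$, and only this last sum admits the H\"older split with your $(\theta_1,\theta_2)$. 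The factor $t^{-\frac{p+t(q-1)}{p+q-1}}$ in (\ref{est-1})--(\ref{est-2}), whose growth as $t$ approaches its critical values is tracked later, is produced by that Young step and cannot appear in your version. (Your Young-plus-absorption passage from (\ref{est-1}) to (\ref{est-2}) would be fine; the paper gets there more simply by noting the first H\"older factor is dominated by the left-hand side and dividing.)
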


\begin{proof}
	For  $ \varphi \in \mathcal{l}(\Omega) $ with compact support in $\Omega $, define $\psi=\varphi^s u^{-t}$, where $(s,t)$ are to be chosen later.
	
	Multiplying both sides of (\ref{vieq}) by $\mu(x)\psi(x)$ and summing up over all $x\in \Omega $, we obtain
	\begin{equation*}
		\sum\limits_{x \in \Omega,y\in V }\mu_{xy}(\nabla_{xy}u)\psi(x)
		+\sum\limits_{x \in \Omega }\mu(x)  u(x)^{p} |\nabla u(x)|^{q}\psi(x)\leq 0,
	\end{equation*}
It follows that
\begin{align}\label{lem2-1}
	&\sum\limits_{x,y \in \Omega}\mu_{xy}(\nabla_{xy}u)\psi(x)+\sum\limits_{x \in \Omega, y\in \Omega^c}\mu_{xy}(\nabla_{xy}u)\psi(x)\nonumber \\
	&+\sum\limits_{x \in \Omega }\mu(x)  u(x)^{p} |\nabla u(x)|^{q}\psi(x)\leq 0.
\end{align}
Specially, when $\Omega=V$, we have $\sum\limits_{x \in \Omega, y\in \Omega^c}\mu_{xy}(\nabla_{xy}u)\psi(x) =0$.

	Noting
	\begin{equation*}
		\sum\limits_{x,y \in \Omega }\mu_{xy}(\nabla_{xy}u)\psi(x)=
		-\frac{1}{2}\sum\limits_{x,y \in \Omega }\mu_{xy}(\nabla_{xy}u)(\nabla_{xy}\psi),
	\end{equation*}
	and
	\begin{equation*}
		\nabla_{xy}\psi=\nabla_{xy}(\varphi^s u^{-t})=
		u(y)^{-t} \nabla_{xy}(\varphi^s)+ \varphi(x)^s \nabla_{xy}(u^{-t}),
	\end{equation*}
	we obtain
	\begin{align*}
		%\begin{split}
			\sum\limits_{x,y \in \Omega }\mu_{xy}(\nabla_{xy}u)\psi(x)=&
			-\frac{1}{2}\sum\limits_{x,y \in \Omega }\mu_{xy} u(y)^{-t} (\nabla_{xy}u) \nabla_{xy}(\varphi^s)\nonumber\\
			&-\frac{1}{2}\sum\limits_{x,y \in \Omega}\mu_{xy} \varphi(x)^s (\nabla_{xy}u)
			\nabla_{xy}(u^{-t}).
		%\end{split}
	\end{align*}
	Then (\ref{lem2-1}) is transformed to
	\begin{align}\label{lem2-2}
		&-\frac{1}{2}\sum\limits_{x,y \in \Omega }\mu_{xy} \varphi(x)^s (\nabla_{xy}u)
		\nabla_{xy}(u^{-t})+\sum\limits_{x \in \Omega, y\in \Omega^c}\mu_{xy}(\nabla_{xy}u)\varphi(x)^su(x)^{-t}\nonumber \\
		& +\sum\limits_{x \in \Omega }\mu(x)  u(x)^{p-t} |\nabla u(x)|^{q} \varphi(x)^s
		\leq \frac{1}{2}\sum\limits_{x,y \in \Omega }\mu_{xy} u(y)^{-t} (\nabla_{xy}u) \nabla_{xy}(\varphi^s).
	\end{align}
Using the mid-value theorem, we have some $\xi$ which is between $u(y)$ and $u(x)$, such that
	\begin{equation*}
		\nabla_{xy}(u^{-t})=u(y)^t-u(x)^t=-t\xi^{-t-1}(u(y)-u(x))
		=-t\xi^{-t-1}\nabla_{xy}u,
	\end{equation*}
	By $\frac{1}{p_0}\leq\frac{u(x)}{u(y)} \leq p_0$,
	%By Lemma \ref{lem1},
	we have
	$\frac{u(x)}{p_0} \leq \xi \leq u(x) p_0$, and
	\begin{align}\label{lem2-3-1}
		&-\frac{1}{2}\sum\limits_{x,y \in \Omega }\mu_{xy} \varphi(x)^s (\nabla_{xy}u)
		\nabla_{xy}(u^{-t})\nonumber\\
		&=\frac{t}{2}\sum\limits_{x,y \in \Omega }\mu_{xy} \varphi(x)^s (\nabla_{xy}u)^2
		\xi^{-t-1}\nonumber\\
		&\geq \frac{t}{2p_0^{t+1}}\sum\limits_{x,y \in \Omega }\mu_{xy} \varphi(x)^su(x)^{-t-1} (\nabla_{xy}u)^2,
	\end{align}
By $0<\frac{u(y)-u(x)}{u(x)}\leq \frac{p_0u(x)-u(x)}{u(x)}=p_0-1$, we obtain
\begin{align}\label{lem2-3-2}
	&\sum\limits_{x \in \Omega, y\in \Omega^c}\mu_{xy}(\nabla_{xy}u)\varphi(x)^su(x)^{-t}
	\nonumber\\&
	\geq \frac{1}{p_0-1}\sum\limits_{x \in \Omega, y\in \Omega^c}\mu_{xy}(\nabla_{xy}u)^2\varphi(x)^su(x)^{-t-1}
	\nonumber\\&
	\geq \frac{t}{2p_0^{t+1}}\sum\limits_{x \in \Omega, y\in \Omega^c}\mu_{xy}(\nabla_{xy}u)^2\varphi(x)^su(x)^{-t-1}.
\end{align}
where we have used that if $\Omega\neq V$, $u(y)>u(x)$ for $x\in\Omega$, $y\in\Omega^c$, and $2p_0^{t+1}\geq t(p_0-1)$ holds for all $t\geq0$.
	
	Combining (\ref{lem2-3-1}) with (\ref{lem2-3-2}), we get
	\begin{align}\label{lem2-3}
		-\frac{1}{2}\sum\limits_{x,y \in \Omega }&\mu_{xy} \varphi(x)^s (\nabla_{xy}u)
		\nabla_{xy}(u^{-t})+\sum\limits_{x \in \Omega, y\in \Omega^c}\mu_{xy}(\nabla_{xy}u)\varphi(x)^su(x)^{-t}
		\nonumber\\&
		\geq  \frac{t}{2p_0^{t+1}}\sum\limits_{x \in \Omega, y\in V}\mu_{xy}(\nabla_{xy}u)^2\varphi(x)^su(x)^{-t-1}
		\nonumber\\&
		=\frac{t}{p_0^{t+1}}\sum\limits_{x \in \Omega}\mu(x)|\nabla u(x)|^2\varphi(x)^su(x)^{-t-1}.
	\end{align}
Especially, when $\Omega=V$, (\ref{lem2-3}) can be deduced from (\ref{lem2-3-1}) directly.

	By the mid-value theorem, there is some $\eta$ between $\varphi(x)$ and $\varphi(y)$ such that
	\begin{equation}\label{lem2-4}
		\nabla_{xy}(\varphi^s)=s\eta^{s-1}(\varphi(y)-\varphi(x))
		=s\eta^{s-1}\nabla_{xy}\varphi.
	\end{equation}
	Substituting (\ref{lem2-4}) and (\ref{lem2-3}) into (\ref{lem2-2}), we have
	\begin{align}\label{lem2-5}
		&\frac{t}{p_0^{t+1}}\sum\limits_{x \in \Omega }\mu(x) \varphi(x)^s      u(x)^{-t-1} |\nabla u(x)|^2
		+\sum\limits_{x \in \Omega }\mu(x)  u(x)^{p-t} |\nabla u(x)|^{q} \varphi(x)^s\nonumber\\&
		\leq \frac{s}{2}\sum\limits_{x,y \in \Omega }\mu_{xy} u(y)^{-t}\eta^{s-1} (\nabla_{xy}u)( \nabla_{xy}\varphi).
	\end{align}
	Observing that $|\nabla u(x)|^2=\sum\limits_{y \in V }\frac{\mu_{xy}}{2\mu(x)}(\nabla_{xy}u)^2$, and
	$ \frac{1}{2p_0} \leq \frac{\mu_{xy}}{2\mu(x)} \leq \frac{1}{2}$, we derive
	\begin{align}\label{grd}
		|\nabla_{xy}u| \leq \sqrt{2p_0}|\nabla u(x)|,\quad \mbox{for any $y\sim x$.}
	\end{align}
	
	 Since $\eta^{s-1} \leq\varphi(x)^{s-1}+\varphi(y)^{s-1} $, $\frac{u(x)}{p_0} \leq \xi \leq u(x) p_0 $, and (\ref{grd}), we have
	\begin{align}\label{lem2-6}
		\frac{s}{2}&\sum\limits_{x,y \in \Omega }\mu_{xy} u(y)^{-t}\eta^{s-1} (\nabla_{xy}u)( \nabla_{xy}\varphi)\nonumber \\
		&\leq \frac{s}{2}\sum\limits_{x,y \in \Omega }\mu_{xy} u(y)^{-t}(\varphi(x)^{s-1}+\varphi(y)^{s-1}) (\nabla_{xy}u)( \nabla_{xy}\varphi)\nonumber \\
		&\leq \frac{s}{2}(1+p^t_0)\sum\limits_{x,y \in \Omega }\mu_{xy} u(x)^{-t}\varphi(x)^{s-1} (\nabla_{xy}u)( \nabla_{xy}\varphi)\nonumber \\
		&\leq \frac{s}{2}(1+p^t_0)\sum\limits_{x,y \in \Omega }\mu_{xy} u(x)^{-t}\varphi(x)^{s-1} |\nabla_{xy}u|| \nabla_{xy}\varphi|\nonumber\\
		&\leq\frac{s}{2}\sqrt{2p_0} (1+p^t_0)\sum\limits_{x,y \in \Omega }\mu_{xy} u(x)^{-t}\varphi(x)^{s-1} |\nabla u(x)||\nabla_{xy}\varphi|.
	\end{align}
	Let
	\begin{equation}\label{def-ab}
		a=\frac{2p + q + t(q-2)}{p + q-t},\quad b=\frac{2p + q + t(q-2)}{p + t(q-1)},
	\end{equation}
	and $t$ to be chosen later such that $a, b\geq 1$.
	
	By applying Young's inequality, we obtain
	\begin{align}\label{lem2-7}
		\frac{s}{2}&\sum\limits_{x,y \in \Omega }\mu_{xy} u(x)^{-t}\varphi(x)^{s-1} |\nabla u(x)||\nabla_{xy}\varphi|
		\nonumber \\
		=&\frac{1}{2}\sum\limits_{x,y \in \Omega }\left(\mu_{xy}^{\frac{1}{b}}(\frac{t}{2})^{\frac{1}{b}}
		u(x)^{-\frac{t+1}{b}} |\nabla u(x)|^{\frac{2}{b}} \varphi(x)^{\frac{s}{b}}\right)\nonumber \\
		&\quad\quad\times \left(\mu_{xy}^{\frac{1}{a}}s(\frac{t}{2})^{-\frac{1}{b}}
		u(x)^{-t+\frac{t+1}{b}} |\nabla u(x)|^{1-\frac{2}{b}} \varphi(x)^{s-1-\frac{s}{b}} |\nabla_{xy}\varphi|\right) \nonumber\\
		\leq& \frac{\epsilon t}{4}\sum\limits_{x,y \in \Omega } \mu_{xy} u(x)^{-t-1}  |\nabla u(x)|^2 \varphi(x)^s\nonumber \\
		&+\epsilon ^{-\frac{a}{b}}s^a 2^{a-1} t^{1-a}\sum\limits_{x,y \in \Omega } \mu_{xy} u(x)^{-t+a-1} |\nabla u(x)|^{2-a} \varphi(x)^{s-a}|\nabla_{xy}\varphi|^a\nonumber \\
		\leq & \frac{\epsilon t}{4}\sum\limits_{x \in \Omega } \mu(x) u(x)^{-t-1}  |\nabla u(x)|^2 \varphi(x)^s\nonumber \\
		&+\epsilon ^{-\frac{a}{b}}s^a 2^{a-1} t^{1-a}\sum\limits_{x,y \in \Omega } \mu_{xy} u(x)^{-t+a-1} |\nabla u(x)|^{2-a} \varphi(x)^{s-a}|\nabla_{xy}\varphi|^a.
	\end{align}
where we used $\sum\limits_{y \in \Omega} \mu_{xy}\leq \mu(x)$, for any $x\in \Omega$.

	Letting $\epsilon =\frac{2}{\sqrt{2p_0} (1+p^t_0) p_0^{t+1}}$, and substituting (\ref{lem2-7}) into (\ref{lem2-6}), we obtain
	\begin{align}\label{lem2-8}
		\frac{s}{2}&\sum\limits_{x,y \in V}\mu_{xy} u(y)^{-t}\eta^{s-1} (\nabla_{xy}u)( \nabla_{xy}\varphi)\nonumber \\
		\leq& \frac{t}{2p_0^{t+1}}\sum\limits_{x \in \Omega } \mu(x) u(x)^{-t-1}  |\nabla u(x)|^2 \varphi(x)^s\nonumber \\
		&+C_{p_0,t}(2s)^a  t^{1-a}\sum\limits_{x,y \in \Omega } \mu_{xy} u(x)^{-t+a-1} |\nabla u(x)|^{2-a} \varphi(x)^{s-a} |\nabla_{xy}\varphi|^a,
	\end{align}
	where
	$$C_{p_0,t}= \frac{(\sqrt{2p_0}(1+p^t_0))^{\frac{p + t(q-1)}{p + q-t}+1}   (p_0^{t+1})^{\frac{p + t(q-1)}{p + q-t}}}{4}.$$
	Combining (\ref{lem2-8}) with (\ref{lem2-5}), we have
	\begin{align*}
		&\frac{t}{2p_0^{t+1}} \sum\limits_{x \in \Omega }\mu(x)\varphi(x)^s u(x)^{-t-1} |\nabla u(x)|^2
		+\sum\limits_{x \in \Omega }\mu(x)  u(x)^{p-t} |\nabla u(x)|^{q} \varphi(x)^s \\
		&
		\leq C_{p_0,t}(2s)^a  t^{1-a}\sum\limits_{x,y \in \Omega } \mu_{xy} u(x)^{-t+a-1} |\nabla u(x)|^{2-a} \varphi(x)^{s-a} |\nabla_{xy}\varphi|^a.
	\end{align*}
	It follows that
	\begin{align}\label{lem2-10}
		&\sum\limits_{x \in \Omega }\mu(x)  u(x)^{p-t} |\nabla u(x)|^{q} \varphi(x)^s \nonumber\\
		\leq& C_{p_0,t}(2s)^a  t^{1-a}\sum\limits_{x,y \in \Omega } \mu_{xy} u(x)^{-t+a-1} |\nabla u(x)|^{2-a} \varphi(x)^{s-a} |\nabla_{xy}\varphi|^a.
	\end{align}
	
	Defining $\gamma=\frac{p+q-t}{1-t},\rho=\frac{p+q-t}{p+q-1}$, and choosing $t$ to make $\gamma,\rho>1$, and applying H\"{o}lder's inequality to RHS of (\ref{lem2-10}), we obtain
	\begin{align}\label{lem2-11}
		&\sum\limits_{x,y \in \Omega } \mu_{xy} u(x)^{-t+a-1} |\nabla u(x)|^{2-a} \varphi(x)^{s-a}|\nabla_{xy}\varphi|^a  \nonumber\\&
		=\sum\limits_{x,y\in \Omega \atop \nabla_{xy} \varphi \neq 0} \mu_{xy}\left(u(x)^{-t+a-1} |\nabla u(x)|^{2-a}\varphi(x)^{\frac{s}{\gamma}}\right)\left(\varphi(x)^{s-a-\frac{s}{\gamma}}|\nabla_{xy}\varphi|^a \right) \nonumber\\&
		\leq \left(\sum\limits_{x,y\in \Omega \atop \nabla_{xy} \varphi \neq 0} \mu_{xy}u(x)^{p-t}
		|\nabla u(x)|^q \varphi(x)^s\right)^{\frac{1}{\gamma}}
		\left(\sum\limits_{x,y\in \Omega } \mu_{xy} \varphi(x)^{s-a\rho} |\nabla_{xy}\varphi|^{a\rho}\right)^{\frac{1}{\rho}},
	\end{align}

	Choosing large enough $s$ to let $ s \geq a\rho$, and noticing $0 \leq \varphi \leq 1$, we derive
	\begin{align}\label{lem2-12}
		&\sum\limits_{x,y \in \Omega } \mu_{xy} u(x)^{-t+a-1} |\nabla u(x)|^{2-a} \varphi(x)^{s-a}|\nabla_{xy}\varphi|^a  \nonumber\\&
		\leq \left(\sum\limits_{x,y\in \Omega \atop \nabla_{xy} \varphi \neq 0} \mu_{xy}u(x)^{p-t}
		|\nabla u(x)|^q \varphi(x)^s\right)^{\frac{1}{\gamma}}
		\left(\sum\limits_{x,y\in \Omega } \mu_{xy} |\nabla_{xy}\varphi|^{a\rho}\right)^{\frac{1}{\rho}}.
	\end{align}
Substituting (\ref{lem2-12}) into (\ref{lem2-10}), we get
	\begin{align*}
		&\sum\limits_{x \in \Omega }\mu(x)  u(x)^{p-t} |\nabla u(x)|^{q} \varphi(x)^s \nonumber\\&
		\leq C_{p_0,t}(2s)^a  t^{1-a} \left(\sum\limits_{x,y\in \Omega \atop \nabla_{xy} \varphi \neq 0} \mu_{xy}u(x)^{p-t}
		|\nabla u(x)|^q \varphi(x)^s\right)^{\frac{1}{\gamma}}
		\left(\sum\limits_{x,y\in \Omega } \mu_{xy} |\nabla_{xy}\varphi|^{a\rho}\right)^{\frac{1}{\rho}}.
	\end{align*}
	Combining the above with (\ref{def-ab}), we derive
	\begin{align*}
		&\sum\limits_{x \in \Omega }\mu(x)  u(x)^{p-t} |\nabla u(x)|^{q} \varphi(x)^s \nonumber\\
		\leq& C_{p_0,t}(2s)^{\frac{2p + q + t(q-2)}{p + q-t}}  t^{-\frac{p + t(q-1)}{p + q-t}}\left(\sum\limits_{x,y\in \Omega \atop \nabla_{xy} \varphi \neq 0} \mu_{xy}u(x)^{p-t}
		|\nabla u(x)|^q \varphi(x)^s\right)^{\frac{1-t}{p+q-t}} \nonumber\\&
		\times \left(\sum\limits_{x,y\in \Omega } \mu_{xy}  |\nabla_{xy}\varphi|^{\frac{2p+q+t(q-2)}{p+q-1}}\right)^{\frac{p+q-1}{p+q-t}}.
	\end{align*}
	then (\ref{est-1}) follows.
	
	Noting $\sum\limits_{x \in \Omega }\mu(x)u(x)^{p-t} |\nabla u(x)|^{q} \varphi(x)^s$ is finite and
	\begin{align*}
		&\sum\limits_{x,y\in \Omega \atop \nabla_{xy} \varphi \neq 0} \mu_{xy}u(x)^{p-t}
		|\nabla u(x)|^q \varphi(x)^s\leq \sum\limits_{x\in \Omega,y\in V} \mu_{xy}u(x)^{p-t}
		|\nabla u(x)|^q \varphi(x)^s\\&
		=\sum\limits_{x \in \Omega }\mu(x)u(x)^{p-t} |\nabla u(x)|^{q} \varphi(x)^s,
	\end{align*}
we obtain
	\begin{align*}
&\left( \sum\limits_{x \in \Omega }\mu(x)  u(x)^{p-t} |\nabla u(x)|^{q} \varphi(x)^s \right)^{\frac{p+q-1}{p+q-t}} \nonumber\\&
\leq C_{p_0,t}(2s)^{\frac{2p + q + t(q-2)}{p + q-t}}  t^{-\frac{p + t(q-1)}{p + q-t}} \left(\sum\limits_{x,y\in \Omega } \mu_{xy}  |\nabla_{xy}\varphi|^{\frac{2p+q+t(q-2)}{p+q-1}}\right)^{\frac{p+q-1}{p+q-t}}.
\end{align*}
Hence we have
	\begin{align*}
		\sum\limits_{x \in \Omega }\mu(x)  u(x)^{p-t} |\nabla u(x)|^{q} \varphi(x)^s
		\leq&( C_{p_0,t})^{\frac{p+q-t}{p+q-1}} (2s)^{\frac{2p+q+t(q-2)}{p+q-1}} t^{-\frac{p+t(q-1)}{p+q-1}}\nonumber\\
		&\times
		\sum\limits_{x,y\in \Omega } \mu_{xy} |\nabla_{xy}\varphi|^{\frac{2p+q+t(q-2)}{p+q-1}},
	\end{align*}
	which implies (\ref{est-2}). Hence, we complete the proof.
\end{proof}

\begin{remark}\label{rem}
	In Lemma \ref{lem2}, since $s$ is only needed to be chosen large enough, it
	suffices to verify that such $t$ exists. For our convenience, let us divide $R^2\setminus \{p+q=1\}$ into four different parts $K_1$, $K_2$, $K_3$, $K_4$ (see figure \ref{fig2})
	\begin{align*}
		&K_1=\{(p,q)|p<1-q, q\leq1\},\quad K_2=\{(p,q)|p\geq0, 1-p<q\leq 1\} \\
		&K_3=\{(p,q)|p>1-q, q>1\}, \quad K_4=\{(p,q)|p<0,1<q<1-p\}
	\end{align*}
		\begin{figure}[H]
	\begin{tikzpicture}[x={(0.8cm,0cm)},y={(0cm,0.8cm)}]
		\draw[->] (-6,0)--(4,0) ;
		\draw[->] (0,-2)--(0,6);
		\fill[green,opacity=0.7] (0,2)--(4,2)--(4,6)--(-4,6) ;
		\fill[blue,opacity=0.7] (0,2)--(4,2)--(4,-2);
		\fill[yellow,opacity=0.7] (0,2)--(-6,2)--(-6,6)--(-4,6);
		\fill[red,opacity=0.7] (0,2)--(-6,2)--(-6,-2)--(4,-2);
		\fill[red,opacity=0.7] (5.7,5.4) rectangle(6.3,4.8);
		\fill[blue,opacity=0.7] (5.7,3.6) rectangle(6.3,3);
		\fill[green,opacity=0.7] (5.7,1.8) rectangle(6.3,1.2);
		\fill[yellow,opacity=0.7] (5.7,0) rectangle(6.3,-0.6);
		\node[above] at (0,6) {$q$};
		\node[right] at (4,0) {$p$};
		\node[below] at (-5,2) {\tiny{$q=1$}};
		\node[below] at (-2,3.5) {\tiny{$p+q=1$}};
		\node[right] at (6.3,5.1) {\small{$K_1$}};
		\node[right] at (6.3,3.3) {\small{$K_2$}};
		\node[right] at (6.3,1.6) {\small{$K_3$}};
		\node[right] at (6.3,-0.3) {\small{$K_4$}};
	\end{tikzpicture}
	\caption{}
	\label{fig2}
\end{figure}
According to the location of $(p, q)$, we choose $t$ in the following way
\begin{enumerate}
\item[1.]{When $(p, q)\in K_1$, take $t>1$; }
\item[2.]{When $(p, q)\in K_2$, take $0<t<1$;}
\item[3.]{When $(p, q)\in K_3$, take $\max\{-\frac{p}{q-1}, 0\}<t<1$;}
\item[4.]{When $(p, q)\in K_4$, take $1<t<-\frac{p}{q-1}$.}
\end{enumerate}	

%	If $(p, q)\in 	\left\{
%	\begin{array}{lr}
%		K_1 \\
%		K_2\\
%		K_3\\
%		K_4
%	\end{array}
%	\right.$ we can choose a such that
%	$	\left\{
%	\begin{array}{lr}
%		t>1 \\
%		t<1\\
%		-\frac{p}{q-1}<t<1\\
%		1<t<-\frac{p}{q-1}
%	\end{array}
%	\right.$
\end{remark}
Now we step into the proof of Theorem \ref{thm1}.
%\textcolor{green}{Assume u is a nontrivial positive solution to (\ref{ieq}). By Lemma \ref{lem1}, u satisfies $\frac{1}{p_0}\leq\frac{u(x)}{u(y)} \leq p_0$, if $y\sim x$.}
\begin{proof}[\rm{Proof of Theorem \ref{thm1} (I)}]

Assume u is a nontrivial positive solution to (\ref{ieq}). By Lemma \ref{lem1}, u satisfies $\frac{1}{p_0}\leq\frac{u(x)}{u(y)} \leq p_0$ for any $y\sim x$.

	For our convenience, let us denote $d(x)= d(o,x)$, and $B_i=\{x\in V:d(x)\leq 2^i\}$ for integer $i$.
	Fix integer $n$, define $h_n$ on $V$ as
	\begin{align}\label{hn}
		h_n(x)=
		\left\{
		\begin{array}{lr}
			1, \qquad \qquad d(x)\leq n,\\
			2-\frac{d(x)}{n}, \quad n<d(x)<2n,\\
			0,  \qquad \qquad  d(x)\geq 2n,
		\end{array}
		\right.
	\end{align}
	and let
	\begin{align}\label{def-phi}
		\varphi_i=\frac{1}{i}\sum\limits^{2i-2}_{k=i-1}h_{2^k}(x).
	\end{align}
It follows that $0\leq\varphi_i\leq1$, and $\varphi_i=1$ in $B_{i-1}$, $\varphi_i=0$ in $(B_{2i-1})^c$. Moreover, for any $x\in B_k-B_{k-1}$, if $k \leq i - 2$ or
	$k \geq 2i + 1$, then $\nabla_{xy}\varphi_i = 0$ for any $y \sim x$; while if $i-1\leq k \leq 2i$, we have
	$$|\nabla_{xy}\varphi_i|\lesssim \frac{1}{i\cdot 2^k}, \quad\mbox{for any $y\sim x$}.$$
	
	Letting $\Omega=V$ in Lemma \ref{lem2}, and substituting $\varphi=\varphi_i$ in (\ref{est-2}), we get
	\begin{align}
		&\sum\limits_{x \in V}\mu(x)  u(x)^{p-t} |\nabla u(x)|^q \varphi_i(x)^s
		\nonumber \\&
		\lesssim C_{p_0,t}  (2s)^{\frac{2p+q+t(q-2)}{p+q-1}} t^{-\frac{p+t(q-1)}{p+q-1}}
		\sum\limits^{2i}_{k=i-1}
		\sum\limits_{x\in B_k-B_{k-1}}
		\sum\limits_{y \sim x} \mu_{xy} |\nabla_{xy}\varphi|^{\frac{2p+q+t(q-2)}{p+q-1}}
		\nonumber \\&
		\lesssim C_{p_0,t}  (2s)^{\frac{2p+q+t(q-2)}{p+q-1}} t^{-\frac{p+t(q-1)}{p+q-1}}
		\sum\limits^{2i}_{k=i-1}
		\sum\limits_{x\in B_k-B_{k-1}}
		\mu(x) (\frac{1}{i\cdot 2^k})^{\frac{2p+q+t(q-2)}{p+q-1}}
		\nonumber \\&
		\lesssim C_{p_0,t}  (2s)^{\frac{2p+q+t(q-2)}{p+q-1}} \frac{t^{-\frac{p+t(q-1)}{p+q-1}}}{i^{\frac{2p+q+t(q-2)}{p+q-1}}}
		\sum\limits^{2i}_{k=i-1}
		\mu(B_k) 2^{-k\frac{2p+q+t(q-2)}{p+q-1}},
	\end{align}
	where $C_{p_0,t}=\frac{(\sqrt{2p_0}(1+p^t_0))^{\frac{p + t(q-1)}{p + q-t}+1}   (p_0^{t+1})^{\frac{p + t(q-1)}{p + q-t}}}{4}$.
	
	Since $G_1\subset K_2 \cup K_3$, and by Remark \ref{rem}, we choose
	$$t=\frac{1}{i},$$
	and $s$ to be some large fixed constant.
	Recalling (\ref{vol-1}), and noting that $p+q>1$ and $q<2$, we obtain
 \begin{align}\label{I-3}
		&\sum\limits_{x \in V}\mu(x)  u(x)^{p-1/i} |\nabla u(x)|^q \varphi_i(x)^s
		\nonumber \\&
		\lesssim i^{-1-\frac{1-1/i}{p+q-1}} \sum\limits^{2i}_{k=i-1}
		2^{\frac{k(2-q)}{i(p+q-1)}} k^{\frac{1}{p+q-1}}
		\nonumber \\&
		\lesssim i^{-1+\frac{1/i}{p+q-1}} \sum\limits^{2i}_{k=i-1}
		2^{\frac{k(2-q)}{i(p+q-1)}}
		\nonumber \\&
		\lesssim i^{\frac{1/i}{p+q-1}}.
	\end{align}
Consequently from letting $i \rightarrow \infty$ in (\ref{I-3})
	$$\sum\limits_{x \in V}\mu(x)  u(x)^{p} |\nabla u(x)|^q <\infty. $$
	%This implies that as $i \rightarrow \infty$,
	%$$\sum_{\substack {x,y\in V \\ \nabla_{xy} \varphi \neq 0}} \mu_{xy}\varphi_i(x)^s u(x)^{p-t}|\nabla u|^q \rightarrow 0$$
	Substituting $\varphi=\varphi_i$ and $t=\frac{1}{i}$ into (\ref{est-1}), and repeating the same procedures, we have
	$$\lim_{i\to\infty}\sum\limits_{x \in V}\mu(x)  u(x)^{p-1/i} |\nabla u(x)|^q \varphi_i(x)^s
	= 0,$$
	namely
	$$\sum\limits_{x \in V}\mu(x)  u(x)^{p} |\nabla u(x)|^q = 0,$$
	which is a contradiction to the assumption that $u$ is nontrivial. Hence, the proof of Theorem \ref{thm1} (I) is complete.
\end{proof}
\begin{proof}[\rm\textbf{Proof of Theorem \ref{thm1} (II)}]
	Let us divide the proof into three cases:
	\begin{enumerate}
		\item[(II-1).]{$(p,q)\in \{p+q>1,q>2\};$}
		\item[(II-2).]{$(p,q)\in \{p+q=1,q>2\};$}
		\item[(II-3).]{$(p,q)\in \{p+q<1,q>2\}.$}
	\end{enumerate}
	
	In case (II-1), it follows that $(p,q)\in K_2$. Hence let
	\begin{align*}
		t=1-\frac{1}{i},
	\end{align*}
	and $s$ be some large fixed constant.
	
	Letting $\Omega=V$ in Lemma \ref{lem2}, substituting $\varphi=\varphi_i$ from (\ref{def-phi}) into (\ref{est-2}), and using the same technique as in (\ref{I-3}), we obtain
	\begin{align}\label{2-1}
		&\sum\limits_{x \in V}\mu(x)  u(x)^{p-t} |\nabla u(x)|^q \varphi_i(x)^s
		\nonumber \\&
		\lesssim C_{p_0,t}  (2s)^{\frac{2p+q+t(q-2)}{p+q-1}} \frac{t^{-\frac{p+t(q-1)}{p+q-1}}}{i^{\frac{2p+q+t(q-2)}{p+q-1}}}
		\sum\limits^{2i}_{k=i-1}
		\mu(B_k) 2^{-k\frac{2p+q+t(q-2)}{p+q-1}},
	\end{align}
	Combining with (\ref{vol-3}) and (\ref{2-1}), and noting that $ C_{p_0,t}  (2s)^{\frac{2p+q+t(q-2)}{p+q-1}}t^{-\frac{p+t(q-1)}{p+q-1}}$ is uniformly bounded
	for $ i $, we obtain
	\begin{align}\label{2-2}
		&\sum\limits_{x \in V}\mu(x)  u(x)^{p-t} |\nabla u(x)|^q \varphi_i(x)^s
		\nonumber \\&
		\lesssim  i^{-\frac{2p+q+t(q-2)}{p+q-1}}
		\sum\limits^{2i}_{k=i-1}
		\mu(B_k) 2^{-k\frac{2p+q+t(q-2)}{p+q-1}}
		\nonumber \\&
		\lesssim  i^{-\frac{2p+q+t(q-2)}{p+q-1}}
		\sum\limits^{2i}_{k=i-1} 2^{k(2-\frac{2p+q+t(q-2)}{p+q-1})}k
		\nonumber \\&
		\lesssim i^{1-\frac{2p+q+t(q-2)}{p+q-1}}
		\sum\limits^{2i}_{k=i-1} 2^{\frac{k(q-2)}{i(p+q-1)}}.
	\end{align}
	Substituting $ t=1-\frac{1}{i}$ into (\ref{2-2}), we get
	\begin{align}\label{2-3}
		&\sum\limits_{x \in V}\mu(x)  u(x)^{p-1+\frac{1}{i}} |\nabla u(x)|^q \varphi_i(x)^s
		\nonumber \\&
		\lesssim i^{1-\frac{2p+q+t(q-2)}{p+q-1}}
		\sum\limits^{2i}_{k=i-1} 2^{\frac{k(q-2)}{i(p+q-1)}}
		\nonumber \\&
		\lesssim i^{\frac{q-2}{i(p+q-1)}}.
	\end{align}
	Letting $i \rightarrow \infty$ in (\ref{2-3}), we have
	$$\sum\limits_{x \in V}\mu(x)  u(x)^{p-1} |\nabla u(x)|^q <\infty. $$
	Substituting $\varphi=\varphi_i$ and $t=1-\frac{1}{i}$ into (\ref{est-1}), and repeating the same procedures as in the proof of Theorem \ref{thm1} (I), we derive
	\begin{align}
		\sum\limits_{x \in V}\mu(x)  u(x)^{p-1} |\nabla u(x)|^q = 0.
	\end{align}
	which yields a contradiction with the nontrivialness of $u$.
	
	In case (II-2),
	denote $\Omega_k=\{ x\in V|0<u(x)<k \}$.
	Since $u$ is a nontrivial positive solution, then there exists a constant $k_0$ such that $\mu(\Omega_k)>0 $ for any $k>k_0$. Now fix such $k$, let $v=\frac{u}{k}$, and $v$ satisfies
	$$ \Delta v+v^p\left|\nabla v\right|^q\leq0, \quad\mbox{ on $V$}.$$
	Obiviously $0<v<1$ in $\Omega_k$, we have
	\begin{align}\label{2-2-1}
		\Delta v+v^{p+\epsilon}\left|\nabla v\right|^q\leq0, \quad\mbox{ on $\Omega_k$},
	\end{align}
	where $p'=p+\epsilon$ and $\epsilon>0$, thus $(p^{\prime},q)\in \{ p^{\prime}+q=p+q+\epsilon>1,q>2 \} $, consequently
	$(p^{\prime},q)\in$ (II-1).
	
	From the definition of $v(x)$ and $\Omega_k$ , we know $\frac{1}{p_0}\leq\frac{v(x)}{v(y)}=\frac{u(x)}{u(y)} \leq p_0$ and  $v(y)-v(x)\geq 0$ when $x\in \Omega_k$, $y\in \Omega_k^c$ .
%Then, for any $x\in \Omega_k$, we have
%	\begin{align}\label{2-2-2}
%		\Delta v(x)&=\sum\limits_{y\in V} \mu_{xy}(v(y)-v(x))\nonumber\\&
%		=\sum\limits_{y\in \Omega_k} \mu_{xy}(v(y)-v(x))+
%		\sum\limits_{y\in \Omega_k^c} \mu_{xy}(v(y)-v(x))\nonumber\\&
%		\geq \sum\limits_{y\in \Omega_k} \mu_{xy}(v(y)-v(x))
%	\end{align}
%	Similarly
%	\begin{align}\label{2-2-3}
%		|\nabla v(x)|&=\sqrt{\sum\limits_{y \in V}\frac{\mu_{xy}}{2\mu(x)}(\nabla_{xy}v)^2} \nonumber\\&
%		=\sqrt{\sum\limits_{y \in\Omega_k}\frac{\mu_{xy}}{2\mu(x)}(\nabla_{xy}v)^2+
%			\sum\limits_{y\in \Omega_k^c}\frac{\mu_{xy}}{2\mu(x)}(\nabla_{xy}v)^2}
%		\nonumber\\&
%		\geq \sqrt{\sum\limits_{y\in \Omega_k}\frac{\mu_{xy}}{2\mu(x)}(\nabla_{xy}v)^2}
%		=|\nabla_{\Omega_k} u(x)|
%	\end{align}
%	Substituting (\ref{2-2-3}) and (\ref{2-2-2}) to (\ref{2-2-1}), and multiplying both sides by $\mu(x)$, we obtain
%	\begin{align*}
%		\sum\limits_{y \in \Omega_k }\mu_{xy}(\nabla_{xy}v)
%		+\mu(x)  v(x)^{p'} |\nabla_{\Omega_k} u(x)|^{q}\leq 0.
%	\end{align*}
Hence by Lemma \ref{lem2}, and by taking the same procedure as in case (II-1) except replacing $V$ with $\Omega_k$, we arrive
	\begin{align}\label{2-2-4}
		\sum\limits_{x \in \Omega_k}\mu(x)  v(x)^{p'-1} |\nabla v(x)|^q = 0,\quad\mbox{ on $\Omega_k$}.
	\end{align}
	Let $k_i=\max\{u(x)|d(o,x)\leq i\}+k_0$, we have
	$B(o,i) \subset \Omega_{k_i} $. Taking $k=k_i$ in (\ref{2-2-4}), we obtain that
	$v\equiv cons.$ in $B(o,i)$, which implies that $u\equiv cons.$ in $B(o,i)$.
	
	Letting $i\rightarrow \infty$, we get $u\equiv cons.$ in $V$, which is a contradiction with that $u$ is nontrivial.
	
	In case (II-3), by taking the same argument as in case (II-1) and letting
	$$t=1+\frac{1}{i},$$
	we finish the proof of Theorem \ref{thm1} (II).
\end{proof}

\begin{proof}[\rm\textbf{Proof of Theorem \ref{thm1} (\uppercase\expandafter{\romannumeral3})}]
	Let us divide the proof into three cases:
	\begin{enumerate}
		\item[(III-1).]{$(p,q)\in G_3 \cap \{p+q>1\}$;}\\
		\item[(III-2).]{$(p,q)\in G_3 \cap \{p+q=1\};$}\\
		\item[(III-3).]{$(p,q)\in G_3 \cap \{p+q<1\}.$}\\
	\end{enumerate}
	
	In case (III-1), since $(p,q)\in K_3$, we take
	\begin{align*}
		t=-\frac{p}{q-1}+\frac{1}{i}.
	\end{align*}
	and $b$ to be some large fixed constant.
	
	Letting $\Omega=V$ in lemma \ref{lem2}, substituting $\varphi=\varphi_i$ to (\ref{est-2}), and using the same procedure as before, we obtain
	\begin{align*}
		\sum\limits_{x \in V}\mu(x)  u(x)^{p-t} |\nabla u(x)|^q \varphi_i(x)^s
		\lesssim  i^{-\frac{2p+q+t(q-2)}{p+q-1}}
		\sum\limits^{2i}_{k=i-1}
		\mu(B_k) 2^{-k\frac{2p+q+t(q-2)}{p+q-1}}.
	\end{align*}
	Combining with (\ref{vol-3}), we obtain
	\begin{align}\label{3-1-10}
		&\sum\limits_{x \in V}\mu(x)  u(x)^{\frac{pq}{q-1}+\frac{1}{i}} |\nabla u|^q \varphi_i(x)^s
		\nonumber \\ &
		\lesssim  i^{-\frac{2p+q+t(q-2)}{p+q-1}}
		\sum\limits^{2i}_{k=i-1}
		2^{k\left(\frac{q}{q-1}-\frac{2p+q+t(q-2)}{p+q-1}\right)}k^{\frac{1}{q-1}}
		\nonumber \\ &
		\lesssim  i^{-\frac{2p+q+t(q-2)}{p+q-1}}
		\sum\limits^{2i}_{k=i-1}
		2^{-\frac{k(q-2)}{i(p+q-1)}}k^{\frac{1}{q-1}}
		\nonumber \\ &
		\lesssim  i^{\frac{1}{q-1}-\frac{2p+q+t(q-2)}{p+q-1}+1}
		\nonumber \\ &
		=i^{-\frac{q-2}{i(p+q-1)}},
	\end{align}
	where we have used that
	$$\frac{q}{q-1}-\frac{2p+q+t(q-2)}{p+q-1}=-\frac{q-2}{i(p+q-1)}.$$
	Then letting $i\to \infty$ in (\ref{3-1-10}), we obtain
	$$\sum\limits_{x \in V}\mu(x)  u(x)^{\frac{pq}{q-1}} |\nabla u(x)|^q <\infty.$$
	Repeating the same procedure as in proof of Theorem \ref{thm1} (I), we derive
	$$\sum\limits_{x \in V}\mu(x)  u(x)^{\frac{pq}{q-1}} |\nabla u(x)|^q=0,$$
	which contradicts with that $u$ is a nontrivial positive solution.
	
	In case (III-2), we take the same procedure as in case (II-2) except letting
	$0<\epsilon<-\frac{p}{2}$, thus $ (p^{\prime},q)=(p+\epsilon,q) \in$ (III-1).
	
	In case (III-3), we repeat the same argument as in case (III-1) except
 taking
	\begin{align*}
		t=-\frac{p}{q-1}-\frac{1}{i}.
	\end{align*}
	Hence, we complete proof of Theorem \ref{thm1} (III).
\end{proof}
\begin{proof}[\rm\textbf{Proof of Theorem \ref{thm1} (IV)}]
	Since here $p<0, q=1$, we choose
	$$t=l+\frac{1}{i},\qquad s=-\frac{l}{p}+2+\frac{1}{i},$$
	where $l>1$ is to be chosen later.
	
	Letting $\Omega=V$ in Lemma \ref{lem2}, substituting $\varphi=\varphi_i$  into (\ref{est-2}), and repeating the same procedure, we obtain
	\begin{align*}
		&\sum\limits_{x \in V}\mu(x)  u(x)^{p-t} |\nabla u(x)|^q \varphi_i(x)^s
		\nonumber \\&
		\lesssim  i^{-\frac{2p+q-t}{p}}
		\sum\limits^{2i}_{k=i-1} \mu(B_k) 2^{-k\frac{2p+q-t}{p}}
		\nonumber \\&
		\lesssim  i^{-\frac{2p+q-t}{p}}
		\sum\limits^{2i}_{k=i-1} 2^{k(\alpha-\frac{2p+q-t}{p})}.
	\end{align*}
	Letting $l$ be a fixed large enough constant such that for all $i$
	$$\alpha-\frac{2p+q-t}{p}<0,$$
	we obtain
	\begin{align}\label{4-1-3}
		\sum\limits_{x \in V}\mu(x)  u(x)^{p-t} |\nabla u(x)|^q \varphi_i(x)^s
		\lesssim i^{1-\frac{2p+q-t}{p}}.
	\end{align}
	Further, we require that $l$ satisfies
	$$1-\frac{2p+q-t}{p}<c<0,$$
by letting $i\to \infty$ in (\ref{4-1-3}), we obtain
	$$\sum\limits_{x \in V}\mu(x)  u(x)^{p-l} |\nabla u(x)|^q=0.$$
	which contradicts with that $u$ is nontrivial. Hence we complete the proof for Theorem \ref{thm1} (IV).
\end{proof}
\begin{proof}[\rm\textbf{Proof of Theorem \ref{thm1} (V)}]
	We divide the proof into two cases:
	\begin{enumerate}
		\item[(V-1).]{$(p,q)\in \{p+q=1,p\geq 0,q>0\};$}
		\item[(V-2).]{$(p,q)\in \{p+q=1,p>1, q<0\}$.}
	\end{enumerate}
	From (\ref{ieq}), we have
	$$\sum\limits_{y \in V}\frac{\mu_{xy}}{\mu(x)}u(y)-u(x)
	+u(x)^{p} |\nabla u(x)|^{q}\leq 0,$$
	that is
$$\sum\limits_{y \in V}\frac{\mu_{xy}}{\mu(x)}u(y)\leq u(x)(1-u(x)^{p-1} |\nabla u(x)|^{q}),$$
	which implies
	\begin{align}\label{5-ieq}
		u(x)^{p-1} |\nabla u(x)|^{q}\leq 1.
	\end{align}
	
	In case (V-1), since $p+q=1$, and $q>0 $, we obtain
	\begin{align}\label{5-1}
		|\nabla u(x)|\leq u(x).
	\end{align}
	Combining this with (\ref{ieq}), noting $p\geq0$, we derive
	\begin{align*}
		\Delta u(x)+(u(x)^{-p} |\nabla u(x)|^{p}) u(x)^{p} |\nabla u(x)|^{q}\leq \Delta u(x)+u(x)^{p} |\nabla u(x)|^{q}\leq 0,
	\end{align*}
	which is
	\begin{align}
		\Delta u(x)+\left|\nabla u(x)\right|\leq 0.
	\end{align}
	
Set $\Omega'_k:=\{ x\in V|0<u(x)<k,|\nabla u(x)|\neq 0 \}$.
		Since $u$ is a nontrivial positive solution, thus there exists some large enough $k$. Such that $\mu( \Omega'_k)>0 $. Now let $v=\frac{u}{k}$, it follows that $v$ satisfies
	$$ \Delta v(x)+\left|\nabla v(x)\right|\leq0.$$
Noting $0<v<1$ on $\Omega'_k$, from (\ref{5-1}), we have
	$$\left|\nabla v(x)\right|\leq v(x)\leq 1.$$
It follows that
	\begin{align}\label{5-2}
		\Delta v(x)+\left|\nabla v(x)\right|^{\lambda}\leq0, \quad\mbox{ on $\Omega'_k$},
	\end{align}
	where $\lambda>1$ is to be chosen later.
	
	From the definition of $\Omega'_k$, we know when $\mu_{xy}\neq 0$, $y\in V \setminus \Omega'_k$ $x\in \Omega'_k$,  we have
	\begin{align*}
		\left\{
		\begin{array}{lr}
			v(y)>v(x), \quad \mbox{when $v(y)\geq1$,}\\
			v(y)=v(x), \quad \mbox{when $v(y)<1$, and $|\nabla v(y)|=0$.}
		\end{array}
		\right.
	\end{align*}
In both cases, we have $v(y)-v(x)\geq 0$.

	Then, for any $x\in \Omega'_k$, we obtain
	\begin{align}\label{5-3}
		\Delta v(x)&=\sum\limits_{y\in V} \frac{\mu_{xy}}{\mu(x)}(v(y)-v(x))\nonumber\\&
		=\sum\limits_{y\in \Omega'_k} \frac{\mu_{xy}}{\mu(x)}(v(y)-v(x))+
		\sum\limits_{y\in V \setminus \Omega'_k} \frac{\mu_{xy}}{\mu(x)}(v(y)-v(x))\nonumber\\&
		\geq \sum\limits_{y\in \Omega'_k} \frac{\mu_{xy}}{\mu(x)}(v(y)-v(x)).
	\end{align}
	Similarly
	\begin{align}\label{5-4}
		|\nabla v(x)|&=\sqrt{\sum\limits_{y \in V}\frac{\mu_{xy}}{2\mu(x)}(\nabla_{xy}v)^2} \nonumber\\&
		=\sqrt{\sum\limits_{y \in\Omega'_k}\frac{\mu_{xy}}{2\mu(x)}(\nabla_{xy}v)^2+
			\sum\limits_{y \in(\Omega'_k)^c}\frac{\mu_{xy}}{2\mu(x)}(\nabla_{xy}v)^2}
		\nonumber\\&
		\geq \sqrt{\sum\limits_{y\in \Omega'_k}\frac{\mu_{xy}}{2\mu(x)}(\nabla_{xy}v)^2}
		:=|\nabla_{\Omega'_k} v(x)|.
	\end{align}
	It is should be know that $|\nabla_{\Omega'_k} u(x)|$ is not the norm of gradient of $ u $ in $\Omega'_k$, since $\mu(x)$ there is still the measure of $x$ in $ V $, instead of in $\Omega$. Noticing $\Omega'_k$ is a  subset of $V$, we have $\sum\limits_{y\in \Omega}\mu_{xy}\leq\sum\limits_{y\in V}\mu_{xy}=\mu(x)$.
	
	Subsitituting (\ref{5-3}) and (\ref{5-4}) into (\ref{5-2}), we obtain
	\begin{align}\label{5-5}
		\sum\limits_{y\in \Omega'_k} \frac{\mu_{xy}}{\mu(x)}(v(y)-v(x))+|\nabla_{\Omega'_k} v(x)|^{\lambda}\leq0, \quad \mbox{on $\Omega'_k$}.
	\end{align}
	
	Multiplying (\ref{5-5}) by $\mu(x)h_n^z$ in (\ref{hn}) and summing up over $x \in \Omega'_k$, we have
	\begin{align}\label{5-6}
		\sum\limits_{x \in \Omega'_k}\mu(x)|\nabla_{\Omega'_k} v(x)|^{\lambda}h_n(x)^z
		&\leq -\sum\limits_{x,y \in \Omega'_k} \mu_{xy} h_n(x)^z(\nabla_{xy}v) \nonumber \\&
		=\frac{1}{2}\sum\limits_{x,y \in \Omega'_k} \mu_{xy} (\nabla_{xy}h_n^z)(\nabla_{xy}v) \nonumber \\&
		=\frac{z}{2}\sum\limits_{x,y \in \Omega'_k} \mu_{xy} \eta^{z-1}(\nabla_{xy}h_n)(\nabla_{xy}v),
	\end{align}
	where $ \eta>0$ is between $h_n(x)$ and $h_n(y)$.
	
	Noting that $|\nabla_{\Omega'_k} v(x)|^2=\sum\limits_{y\in \Omega'_k}\frac{\mu_{xy}}{2\mu(x)}(\nabla_{xy}v)^2$, and
	$ \frac{1}{2p_0} \leq \frac{\mu_{xy}}{2\mu(x)} \leq \frac{1}{2}$, we derive
	\begin{align}\label{grd2}
		|\nabla_{xy}v| \leq \sqrt{2p_0}|\nabla_{\Omega'_k} v(x)|\qquad \mbox{for any $y\sim x$.}
	\end{align}
	
	Combining (\ref{grd2})  and $ \eta^{z-1} \leq max(h_n(x)^{z-1},h_n(y)^{z-1})\leq h_n(x)^{z-1}+h_n(y)^{z-1} $ with (\ref{5-6}),  and applying H\"{o}lder's inequality, we obtain
	\begin{align}
		&\sum\limits_{x \in \Omega'_k}\mu(x)|\nabla_{\Omega'_k} v(x)|^{\lambda}h_n(x)^z
		\nonumber \\&
		\leq \frac{z}{2}\sum\limits_{x,y \in \Omega'_k} \mu_{xy}  (h_n(x)^{z-1}+h_n(y)^{z-1})(\nabla_{xy}h_n)(\nabla_{xy}v) \nonumber \\&
		=z\sum\limits_{x,y \in \Omega'_k} \mu_{xy}  h_n(x)^{z-1}(\nabla_{xy}h_n)(\nabla_{xy}v)		\nonumber \\&
		\leq z\sum\limits_{x,y \in \Omega'_k} \mu_{xy}  h_n(x)^{z-1}|\nabla_{xy}h_n||\nabla_{xy}v| \nonumber \\&
		\leq\sqrt{2p_0}z\sum\limits_{x,y \in \Omega'_k} \mu_{xy}  h_n(x)^{z-1}|\nabla_{xy}h_n||\nabla_{\Omega'_k} v(x)| \nonumber \\&
		\leq \sqrt{2p_0}z\left( \sum\limits_{x,y \in \Omega'_k} \mu_{xy}|\nabla_{\Omega'_k} v(x)|^{\lambda}h_n(x)^{z}\right)^{\frac{1}{\lambda}}
		\left( \sum\limits_{x,y \in \Omega'_k} \mu_{xy}|\nabla_{xy}h_n|^{z}\right)^{\frac{\lambda-1}{\lambda}}\nonumber \\&
			\leq \sqrt{2p_0}z\left( \sum\limits_{x \in \Omega'_k} \mu(x)|\nabla_{\Omega'_k} v(x)|^{\lambda}h_n(x)^{z}\right)^{\frac{1}{\lambda}}
		\left( \sum\limits_{x,y \in \Omega'_k} \mu_{xy}|\nabla_{xy}h_n|^{z}\right)^{\frac{\lambda-1}{\lambda}},
	\end{align}
	where we take
	$$z=\dfrac{\lambda}{\lambda-1}.$$
	
	By the boundedness of $\sum\limits_{x \in \Omega'_k}\mu(x)|\nabla_{\Omega'_k} v(x)|^{\lambda}h_n^z$, and $h_n=1$ in $ B(o,n) $, and (\ref{vol-5}), we obtain
	\begin{align}\label{5-7}
		&\sum\limits_{x \in \Omega'_k\cap B(o,n) }\mu(x)|\nabla_{\Omega'_k}v(x)|^{\lambda}
		\nonumber \\&
		\leq (\sqrt{2p_0}z)^z \sum\limits_{x,y \in \Omega'_k} \mu_{xy}|\nabla_{xy}h_n|^{z}
		\nonumber \\&
		\leq (\frac{\sqrt{2p_0}z}{n})^z \mu(B(o,2n))
		\nonumber \\&
		\lesssim (\frac{\sqrt{2p_0}z}{n})^z e^{2\kappa n},
	\end{align}
	where we have used that $|\nabla_{xy}h_n|\leq \frac{1}{n}$, and $|\nabla_{xy}h_n|=0$ for  $x,y\in B(o,2n)^c$.
	Set
	\begin{align}\label{5-8}
		z=\theta n,
	\end{align}
	where $\theta$ is a fixed positive constant to be determined later. It is easy to see that $\lambda \to 1_+$ is equivalent to $n \to \infty$.
	
	Now let $\lambda \to 1_+$ in (\ref{5-7}), by (\ref{5-8}), we obtain
	\begin{align}\label{5-9}
		\sum\limits_{x \in \Omega'_k }\mu(x)|\nabla_{\Omega'_k}v(x)|&\leq \lim_{\lambda \to 1_+}\sum\limits_{x \in \Omega'_k\cap B(o,n) }\mu(x)|\nabla_{\Omega'_k}v(x)|^{\lambda}
		\nonumber \\&
		\lesssim  \lim_{n\to \infty} (\frac{\sqrt{2p_0}z}{n})^z e^{2\kappa n}\nonumber \\&
		\asymp \lim_{n\to \infty} e^{n(2\kappa+\theta\ln(\sqrt{2p_0}\theta))}.
	\end{align}
	If we have
	\begin{align}\label{5-10}
		2\kappa+\theta\ln(\sqrt{2p_0}\theta)<0,
	\end{align}
	which is equivalent to
	$$e^{2\kappa}<\frac{1}{(\sqrt{2p_0}\theta)^\theta}.$$
	
	Since $\frac{1}{(\sqrt{2p_0}\theta)^\theta}$ attains its maximum at $\theta=\frac{1}{\sqrt{2p_0}e}$. Hence
	if $\kappa$ satisfies
	\begin{align}
		0<\kappa<\kappa_0=\frac{1}{2\sqrt{2p_0}e},
	\end{align}
	there always exists $\theta>0$ such that (\ref{5-10}) holds.
	
	Under the above choice of $\kappa$ and $\theta$, from (\ref{5-9}), we obtain
	\begin{align}\label{5-11}
		\sum\limits_{x \in \Omega'_k }\mu(x)|\nabla_{\Omega'_k}v(x)|=0.
	\end{align}
	
	Hence, $x\in \Omega'_k$, $|\nabla_{\Omega'_k}v(x)|=0$. We calim that $|\nabla v(x)|=0$ for any $x\in \Omega'_k$. Then (\ref{5-11}) contradicts with the definition of $\Omega'_k$.
	
	Now assume there exists some $x_0\in \Omega'_k$ satisfying $ |\nabla_{\Omega'_k}v(x_0)|=0 $ but $|\nabla v(x_0)|\not=0$.
	
	We define $ U=\{y| y\sim x_0 \mbox{ and } u(y) \not= u(x_0)\}$, it is easy to see $U\subset (\Omega'_k)^c$ since
	$ |\nabla_{\Omega'_k}v(x_0)|=0 $. For any point $y\in U$ , we derrive $v(y)>v(x_0)$. Otherwise $v(y)<v(x_0)$ by the definition of $U$, noting  $|\nabla v(y)|\geq \sqrt{\frac{\mu_{x_0y}}{2\mu(y)}(v(y)-v(x_0))^2}>0$,
	we derrive that $y\in \Omega'_k$, which contradicts with $y\in U$.
	
	Now we consider the laplacian of $v(x_0)$. Noticing $u(y)= u(x_0)$ for any $y\in U^c$, we can obtain
	\begin{align*}
		&\Delta v(x_0)=\sum\limits_{y\in V} \frac{\mu_{x_0y}}{\mu(x_0)}(v(y)-v(x_0))\nonumber\\&
		=\sum\limits_{y\in U} \frac{\mu_{x_0y}}{\mu(x_0)}(v(y)-v(x_0))+
		\sum\limits_{y\in U^c} \frac{\mu_{x_0y}}{\mu(x_0)}(v(y)-v(x_0))\nonumber\\&
		=\sum\limits_{y\in U} \frac{\mu_{x_0y}}{\mu(x_0)}(v(y)-v(x_0))>0,
	\end{align*}
	which contradicts with (\ref{5-2}).
	
	In case (V-2), since $p+q=1$ $q<0$, from (\ref{5-ieq}), we obtain
	\begin{align}\label{5-2-1}
		u(x)\leq|\nabla u(x)|.
	\end{align}
	
	Define $\Omega_k=\{ x\in V|0<u(x)<k \}$.
	Since u is a nontrivial positive solution, there exists a $k_0>0$ such that $\mu(\Omega_k)>0 $ for any $k>k_0$. Now fix $k$, let $v=\frac{u}{k}$, which satisfies
	$$ \Delta v+v^p\left|\nabla v\right|^q\leq0, \quad\mbox{ on $V$}.$$
	It is easy to see that $0<v<1$ in $\Omega_k$, hence
	\begin{align*}
		\Delta v+v^{p+\epsilon}\left|\nabla v\right|^q\leq0, \quad\mbox{ on $\Omega_k$},
	\end{align*}
	where $\epsilon>0$ is to be determined later.
		
	Since $(p',q):=(p+\epsilon,q)\in K_2$, $p'+q=1+\epsilon$, we fix $0<t<1$, take $s=\frac{2p+q+t(q-2)+2\epsilon}{\epsilon}+1$ and set
	$$z=\frac{2p+q+t(q-2)+2\epsilon}{\epsilon}$$
	It is easy to see that $z\to\infty$ is equivalent to $\epsilon\to 0$.
	
	Noting $\frac{1}{p_0}\leq\frac{v(x)}{v(y)} \leq p_0$ and $v(y)-v(x)\geq 0$ when $x\in \Omega_k$ and $y\in \Omega_k^c$, letting $\Omega=\Omega_k$ in Lemma \ref{lem2}, and substituting $\varphi=h_n$ in (\ref{est-2}), we get
	\begin{align}\label{5-2-3}
		&\sum\limits_{x \in \Omega_k}\mu(x)  v(x)^{p-t+\epsilon} |\nabla v(x)|^q h_n(x)^s
		\nonumber \\&
		\lesssim C'_{p_0,t}  (2s)^{\frac{2p'+q+t(q-2)}{p'+q-1}} t^{-\frac{p'+t(q-1)}{p'+q-1}}
		\sum\limits_{x,y\in \Omega_k}
		\mu_{xy} |\nabla_{xy}h_n|^{\frac{2p'+q+t(q-2)}{p'+q-1}}
		\nonumber \\&
		=C'_{p_0,t}  (2s)^{\frac{2p+q+t(q-2)+2\epsilon}{\epsilon}} t^{-\frac{p+t(q-1)+\epsilon}{\epsilon}}
		\sum\limits_{x,y\in \Omega_k}
		\mu_{xy} |\nabla_{xy}h_n|^{\frac{2p+q+t(q-2)+2\epsilon}{\epsilon}}
		\nonumber \\&
		=2^{\frac{p+t(q-1)+\epsilon}{2p+q+t(q-2)+2\epsilon}}(C_{p_0,t})^{\frac{p'+q-t}{2p+q+t(q-2)+2\epsilon}}
		(2C_{p_0,t}(z+1))^{z} (\frac{1}{t})^{z}
		\sum\limits_{x,y\in \Omega_k}
		\mu_{xy} |\nabla_{xy}h_n|^{z},
		\end{align}
	where $C_{p_0,t}=\frac{(\sqrt{2p_0}(1+p^t_0))^{\frac{p+ t(q-1)+\epsilon}{p + q-t+\epsilon}+1}   (p_0^{t+1})^{\frac{p + t(q-1)+\epsilon}{p+ q-t+\epsilon}}}{4}$, $C'_{p_0,t} =( C_{p_0,t})^{\frac{p+q-t+\epsilon}{p+q-1+\epsilon}}$.
	
	It is easy to see that $C_{p_0,t}\to C_1$  and $2^{\frac{p+t(q-1)+\epsilon}{2p+q+t(q-2)+2\epsilon}}(C_{p_0,t})^{\frac{p'+q-t}{2p+q+t(q-2)+2\epsilon}}\to C_2$ as $\epsilon\to 0$, where $C_1=\frac{(\sqrt{2p_0}(1+p^t_0))^{\frac{p+ t(q-1)}{p + q-t}+1}   (p_0^{t+1})^{\frac{p + t(q-1)}{p+ q-t}}}{4}$ and $C_2$ are two positive constant depending on $p$, $q$ and $p_0$.
	
	Combining (\ref{5-2-3}) with $h_n=1$ in $ B(o,n) $, we obtain
	\begin{align}\label{5-2-4}
		&\sum\limits_{x \in \Omega'_k\cap B(o,n) }\mu(x) v(x)^{p-t+\epsilon} |\nabla v(x)|^q
		\nonumber \\&
		\lesssim 	(\frac{2C_1(z+1)}{t})^{z} \sum\limits_{x,y\in \Omega_k}\mu_{xy} |\nabla_{xy}h_n|^{n}
		\nonumber \\&
		\leq (\frac{2C_1(z+1)}{tn})^{z} \mu(B(o,2n))
		\nonumber \\&
		\lesssim (\frac{2C_1(z+1)}{tn})^{z} e^{2\kappa n}.
	\end{align}
	Now let us connect z and n by defining
	\begin{align}
		z=\theta n,
	\end{align}
	where $\theta$ is a fixed positive constant to be determined later.
	
	By let $n\to \infty$ in (\ref{5-2-4}), we obtain
	\begin{align}\label{5-2-5}
		\sum\limits_{x \in \Omega'_k}\mu(x) v(x)^{p-t}  |\nabla v(x)|^q &
		\lesssim  \lim_{n\to \infty} (\frac{2C_1(\theta n+1)}{tn})^{\theta Nn} e^{2\kappa n}\nonumber \\&
		\asymp \lim_{n\to \infty} e^{n(2\kappa+\theta\ln(\frac{2\theta C_1}{t}))}.
	\end{align}
	
	Similarly, if $\kappa$ satisfies
	\begin{align}
		0<\kappa<\kappa_{0,t}=\frac{t}{4C_1e},
	\end{align}
	
	there always exists $\theta>0$ such that $2\kappa+\theta\ln(\frac{2\theta C_1}{t})<0$.
	
	Noticing $p+q=1$, since $\frac{p+t(q-1)}{p+q-t}=1-q$, we have
	$$C_1=\frac{(\sqrt{2p_0}(1+p^t_0))^{\frac{p+ t(q-1)}{p + q-t}+1}   (p_0^{t+1})^{\frac{p + t(q-1)}{p+ q-t}}}{4}=\frac{(\sqrt{2p_0}(1+p^t_0))^{2-q}   (p_0^{t+1})^{1-q}}{4}.$$
	
	Since $\frac{t}{4C_1e}=\frac{t}{(e\sqrt{2p_0}(1+p^t_0))^{2-q}   (p_0^{t+1})^{1-q}}$, $t\in[0,1]$ attains its maximum at $t=1$. Then for any $0<\kappa<\kappa_0:=\frac{1}{(e\sqrt{2p_0}(1+p_0))^{2-q}   (p_0^{2})^{1-q}}$, there exists some $0<t_{\kappa}<1$
such that $\kappa<\kappa_{0,t}$ and $2\kappa+\theta\ln(\frac{2\theta C_1}{t})<0$.	
	
	Under the above choice of $\kappa$ and $\theta$ , from (\ref{5-2-5}), we obtain
	\begin{align}\label{5-2-6}
		\sum\limits_{x \in \Omega'_k}\mu(x) v(x)^{p-t_{\kappa}}  |\nabla v(x)|^q=0,
	\end{align}
	which contradicts with that u is a nontrivial positive solution by (\ref{5-2-1}).
	Hence we complete the proof for Theorem \ref{thm1} (V).
\end{proof}

\begin{proof}[\rm\textbf{Proof of Theorem \ref{thm1-1}}]
	Since $u$ is nontrivial, there exists some $x_0\in V$ such that $|\nabla u(x_0)|\neq 0$. Let $x_1$ be a neighboring vertex of $x_0$ such that
$$u(x_1)=min_{y \sim x_0}u(y).$$
	
	By (\ref{ieq}), and noting $u$ is nontrivial, we obtain $\Delta u(x_0)<0$.
Since $|\nabla u(x_0)|\neq 0$, we have
	$$u(x_0)> u(x_1), \quad|\nabla u(x_1)|\neq 0,\quad \text{and}\quad \Delta u(x_1)<0.$$
	Inductively, for $x_i$, we can find $x_{i+1}\in V$ such that $u(x_{i+1})=\min\limits_{y \sim x_i}u(y)$ and
	$$u(x_i)> u(x_{i+1}),\quad|\nabla u(x_{i+1})|\neq 0,\quad\text{and}\quad \Delta u(x_{i+1})<0. $$
	Combining $u(x_{i+1})< u(x_i)<\cdot\cdot\cdot< u(x_0)$ with $u(x)>0$, by Monotone Convergence theorem, we obtain there exists nonegative constant $u_0$ such that
	\begin{align}\label{6-lim}
		\lim\limits_{i\to \infty}u(x_i)=u_0.
	\end{align}
	Since
\begin{equation}\label{6-lap}
	0>\Delta u(x_i)=\sum\limits_{y \sim x_i} \frac{\mu_{x_iy}}{\mu(x_i)}u(y)-u(x_i)\geq u(x_{i+1})-u(x_i).
\end{equation}
	by Squeeze theorem, we know
	\begin{align}\label{6-lim-1}
		\lim_{i\to \infty}\Delta u(x_i)=0.
	\end{align}
	It follows from (\ref{ieq}) that
	\begin{align}\label{6-lim-2}
		\lim_{i\to \infty}u(x_i)^p|\nabla u(x_i)|^q=0.
	\end{align}
	Applying Jesnsen's inequality, we obtain
	\begin{align*}
		|\Delta u(x_i)|^2&=\left| \sum\limits_{y \sim x_i}\frac{\mu_{x_iy}}{\mu(x_i)}(u(y)-y(x_i))\right|^2\nonumber \\&
		\leq\left( \sum\limits_{y \sim x_i}\frac{\mu_{x_iy}}{\mu(x_i)}|u(y)-y(x_i)|\right)^2\nonumber \\&
		\leq \sum\limits_{y \sim x_i}\frac{\mu_{x_iy}}{\mu(x_i)}(u(y)-y(x_i))^2=2|\nabla u(x_i)|^2,
	\end{align*}
	%where $\sum\limits_{y \sim x_i}\frac{\mu_{x_iy}}{\mu(x_i)}=1$.
Consequently
	\begin{align}\label{6-ieq}
		|\Delta u(x_i)|\leq\sqrt{2}|\nabla u(x_i)|.
	\end{align}
	Combining (\ref{ieq}) with (\ref{6-ieq}), we derive
	\begin{align*}
		u(x_i)^p|\nabla u(x_i)|^q\leq -\Delta u(x_i)\leq|\Delta u(x_i)|\leq\sqrt{2}|\nabla u(x_i)|.
	\end{align*}
	Noticing $u(x_i)>0$ and $|\nabla u(x_i)|\not=0$, we obtain
	\begin{align}\label{6-ieq-2}
		1\leq	\sqrt{2}u(x_i)^{-p}|\nabla u(x_i)|^{1-q}.
	\end{align}

Let us finish the proof by dividing into the following cases:
	\begin{enumerate}
	\item[(1).]{$q=0$, $0<p\leq1$}
	\item[(2).]{$q=p=0$}
	\item[(3).]{$q=0, p<0$}
	\item[(4).]{$0<q<1, p<1-q$}
	\item[(5).]{$q<0$, $0<p<1-q$}
	\item[(6).]{$q<0, p=0$}
	\item[(7).]{$q<0, p<0$}
\end{enumerate}

In case (1), since $q=0$, $0<p\leq 1$, combining (\ref{6-lim}) and (\ref{6-lim-2}),
we obtain
$$\lim_{i\to \infty}u(x_i)=0.$$
By (\ref{ieq}), we get
\begin{align*}
	0\geq\sum\limits_{y \sim x_i} \frac{\mu_{x_iy}}{\mu(x_i)}(u(y)-u(x_i))+u(x_i)^{p}.
\end{align*}
For $0<p< 1$, we have
\begin{align*}
	0\geq -u(x_i)+u(x_i)^{p},
\end{align*}
which implies $u(x_i)\geq1$, and contradicts with $\lim\limits_{i\to \infty}u(x_i)=0$.

For $p= 1$, we have
\begin{align*}
	0\geq\sum\limits_{y \sim x_i} \frac{\mu_{x_iy}}{\mu(x_i)}u(y)-u(x_i)+u(x_i)=\sum\limits_{y \sim x_i} \frac{\mu_{x_iy}}{\mu(x_i)}u(y),
\end{align*}
which contradicts with $u>0$.

In case (2), by (\ref{ieq}), we have
\begin{align*}
	\Delta u(x_i)+1\leq 0,
\end{align*}
which contradicts with (\ref{6-lim-1})

In case (3), since $q=0$, $p<0$, we derive a contradiction from (\ref{6-lim})-(\ref{6-lim-2}).

In case (4), we divide the proof into two cases:
\begin{enumerate}
	\item[(4-1).]{there exists some $k_0$, such that when $i>k_0$, $|\nabla u(x_i)|\leq \lambda(u(x_i)-u(x_{i+1}))$, where $\lambda=u(x_0)^{\frac{1-p-q}{q}}$}.
	\item[(4-2).]{there exists a series $\{i_k\}$, such that $i_k\to\infty$ as $k\to \infty$ and $|\nabla u(x_{i_k})|> \lambda(u(x_{i_k})-u(x_{i_k+1}))$.}
\end{enumerate}

In case (4-1), since $1-q>0$, by substituting $|\nabla u(x_i)|\leq \lambda(u(x_i)-u(x_{i+1}))$ into (\ref{6-ieq-2}), we obtain
$$1\leq \sqrt{2}\lambda^{1-q}u(x_i)^{-p}(u(x_i)-u(x_{i+1}))^{1-q},$$
which implies by (\ref{6-lim}) that $p>0$ and $\lim\limits_{i\to \infty}u(x_i)=u_0=0$.

Noticing $u(x_i)-u(x_{i+1})<u(x_i)$ and $0<p<1-q$, we have
$$1\leq \sqrt{2}\lambda^{1-q}u(x_i)^{1-q-p}.$$
Since $p<1-q$, by letting $i\to \infty$, we derive a contradiction.

In case (4-2), and by (\ref{ieq}), we have
$$\Delta u(x_{i_k})+u(x_{i_k})^p|\nabla u(x_{i_k})|^q\leq 0,$$
which is equivalent to
$$|\nabla u(x_{i_k})|\leq \left[ |\Delta u(x_{i_k})|u(x_{i_k})^{-p}\right]^{\frac{1}{q}}. $$

Noting $0<q<1$, by (\ref{6-lap}), (\ref{6-lim-1}) and $|\nabla u(x_{i_k})|> \lambda (u(x_{i_k})-u(x_{i_k+1}))$, we obtain
$$\lambda (u(x_{i_k})-u(x_{i_k+1}))\leq |\nabla u(x_{i_k})|\leq (u(x_{i_k})-u(x_{i_k+1}))^{\frac{1}{q}}u(x_{i_k})^{-\frac{p}{q}},$$
which implies
$$\lambda\leq (u(x_{i_k})-u(x_{i_k+1}))^{\frac{1}{q}-1}u(x_{i_k})^{-\frac{p}{q}}\leq u(x_{i_k})^{\frac{1-p-q}{q}} .$$
Noting $\lambda=u(x_0)^{\frac{1-p-q}{q}}$, and $u(x_i)<u(x_0)$ for $i\geq1$, we derive a contradiction.

	For case (5), (6), and (7), since
\begin{align*}
	0\geq\Delta u(x_i)=\sum_{y\sim x_i\atop u(y)>u(x_i)}\frac{\mu_{x_iy}}{\mu(x_i)}(u(y)-u(x_i))+\sum_{y\sim x_i\atop u(y)\leq u(x_i)}\frac{\mu_{x_iy}}{\mu(x_i)}(u(y)-u(x_i)),
\end{align*}
It follows that
\begin{align*}
	\sum_{y\sim x_i\atop u(y)>u(x_i)}\frac{\mu_{x_iy}}{\mu(x_i)}(u(y)-u(x_i))\leq\sum_{y\sim x_i\atop u(y)\leq u(x_i)}\frac{\mu_{x_iy}}{\mu(x_i)}(u(x_i)-u(y)).
\end{align*}
Hence, we know for any $y_0\sim x_i$, and $u(y_0)>u(x_i)$, and using $(p_0)$ condition, we have
\begin{align}\label{6-5}
	\frac{1}{p_0}(u(y_0)-u(x_i))&\leq\frac{\mu_{x_iy_0}}{\mu(x_i)}((u(y_0)-u(x_i))\nonumber\\
	&\leq	\sum_{y\sim x_i\atop u(y>u(x_i))}\frac{\mu_{x_iy}}{\mu(x_i)}(u(y)-u(x_i))\nonumber\\
	&\leq \sum\limits_{y \sim x_i \atop u(y)\leq  u(x_i)}\frac{\mu_{x_iy}}{\mu(x_i)}(u(x_i)-u(y)) \leq u(x_i)-u(x_{i+1}).
\end{align}
Here we have also used that $u(x_{i+1})=\min\limits_{y\sim x_i}u(y)<u(x_i)$.

Combining (\ref{6-5}) with the definition of $|\nabla u(x_i)|$, we have
\begin{align*}
	|\nabla u(x_i)|^2&=\sum\limits_{y \sim x_i \atop u(y)> u(x_i)}\frac{\mu_{x_iy}}{2\mu(x_i)}(u(y)-u(x_i))^2+\sum\limits_{y \sim x_i \atop u(y)\leq u(x_i)}\frac{\mu_{x_iy}}{2\mu(x_i)}(u(y)-u(x_i))^2
	\\ &
	\leq \frac{p_0^2}{2}(u(x_{i+1})-u(x_i))^2+\frac{1}{2}(u(x_{i+1})-u(x_i))^2
	\\ &
	= \frac{1+p_0^2}{2}(u(x_{i+1})-u(x_i))^2,
\end{align*}
which is
\begin{align}\label{6-6}
	|\nabla u(x_i)|\leq \sqrt{\frac{1+p_0^2}{2}}(u(x_i)-u(x_{i+1})),
\end{align}
It follows that
\begin{align}\label{6-lim-3}
	\lim_{i\to \infty}|\nabla u(x_i)|=0.
\end{align}.

	In case (5), noticing $p>0$, $q< 0$ and combining (\ref{6-lim}) (\ref{6-lim-2}) and (\ref{6-lim-3}), we derive
$$\lim_{i\to \infty}u(x_i)=0.$$
	By (\ref{ieq}), we have
	\begin{align*}
		0&\geq\sum\limits_{y \sim x_i} \frac{\mu_{x_iy}}{\mu(x_i)}(u(y)-u(x_i))+u(x_i)^{p}|\nabla u(x_i)|^q\\&
		=\sum\limits_{y \sim x_i} \frac{\mu_{x_iy}}{\mu(x_i)}u(y)-u(x_i)(1-u(x_i)^{p-1}|\nabla u(x_i)|^q),
	\end{align*}
	which implies
	\begin{align}\label{6-0}
		1-u(x_i)^{p-1}|\nabla u(x_i)|^q\geq 0.
	\end{align}
	Combining (\ref{6-0}) with (\ref{6-6}), we obtain
	\begin{align}\label{6-7}
		1\geq u(x_i)^{p-1}|\nabla u(x_i)|^q\geq (\frac{p_0^2+1}{2})^{\frac{q}{2}}u(x_i)^{p-1}(u(x_i)-u(x_{i+1}))^q.
	\end{align}
 Since $q<0$, we have
 \begin{align}\label{6-8}
 	1\geq (\frac{p_0^2+1}{2})^{\frac{q}{2}}u(x_i)^{p-1}(u(x_i)-u(x_{i+1}))^q\geq(\frac{p_0^2+1}{2})^{\frac{q}{2}} u(x_i)^{p+q-1}.
 \end{align}
	 Using $p<1-q$, and by letting $i\to \infty$, we obtain a contradiction from (\ref{6-8}) with $\lim\limits_{i\to \infty}u(x_i)=0$.

	In case (6), noticing $p=0$, $q<0$, we obtain a contradiction by (\ref{6-lim-2}) and (\ref{6-lim-3}).
	
	In case (7), since $p<0$, $q< 0$, substituting (\ref{6-lim}) and (\ref{6-lim-3}) into
	(\ref{6-lim-2}), we derive a contradiction.
	Hence, we complete the proof of Theorem \ref{thm1-1}.
\end{proof}

\section{Proof of Theorem \ref{thm2}}

Before presenting the proof of Theorem \ref{thm2}, for our convenience, let us introduce some notations. For fixed integer $n\geq0$, let us denote by $D_n: = \{x \in T_N : d(o, x) = n\}$ the collection of all the vertices with distance $n$ from $o$, and denote by $E_n$ the collection of all the edges from vertices in $D_n$ to vertices in $D_{n+1}$.

\begin{proof}[\rm{Proof of Theorem \ref{thm2} (I)}]
	When $(p, q)\in G_1$, let us take $\mu$ and $u$ as follows
	\begin{align}\label{mu1}
		&\mu_{xy}=\mu_n=\frac{(n+n_0)^{\frac{p+1}{p+q-1}}(\ln{(n+n_0)})^{\frac{1}{p+q-1}+\epsilon}}{(N-1)^n},\quad \mbox{for any $(x,y)\in E_n$, $n\geq0$}.
	\end{align}
	\begin{align}\label{u1}
		&u(x)=u_n=\frac{\delta}{(n+n_0)^{\frac{2-q}{p+q-1}}(\ln{(n+n_0)})^{\frac{1}{p+q-1}}},
		\quad \mbox{for any $x\in D_n$, $n\geq0$}.
	\end{align}
	where $n_0 \geq 2$  and $\delta > 0$ are to be chosen later.
	
First, under the above choice of $\mu$,
for $n \geq 2$, we obtain
	$$\mu(B(o,n))=\sum\limits^n_{k=0}\mu(D_k)\asymp \sum\limits^n_{k=0}(N-1)^k \mu_k\asymp  n^{\frac{2p+q}{p+q-1}}(\ln{n})^{\frac{1}{p+q-1}+\epsilon},$$
which implies that (\ref{e-vol-1}) holds.

Next we verify that (\ref{ieq}) holds for the above choice of $\mu$ and $u$, namely the following two inequalities hold:
	\begin{align}\label{en0}
		u_1-u_0+u_0^p\left[\frac{(u_0-u_1)^2}{2}\right]^{\frac{q}{2}} \leq 0,
	\end{align}
and
	\begin{align}\label{en}
		&\frac{(N-1)\mu_n u_{n+1}+\mu_{n-1} u_{n-1}}{(N-1)\mu_n+\mu_{n-1}}-u_n\nonumber\\
&+u_n^p\left[\frac{(N-1)\mu_n(u_{n+1}-u_n)^2+\mu_{n-1}(u_{n-1}-u_n)^2}{2(N-1)\mu_n+2\mu_{n-1}}\right]^{\frac{q}{2}} \leq 0.
	\end{align}
%And we will show that the choice of $n_0$ and $\delta$ depend on $\epsilon$ closely, and
%The two constants $n_0$ and $\delta$ are closely related to $\epsilon$ and we will first determine $n_0$ and then $\delta$.
For brevity, we denote
$$\lambda=\frac{p+1}{p+q-1},\quad \beta=\frac{1}{p+q-1}, \quad\sigma=\frac{2-q}{p+q-1}.$$
Now let us deal with cases of $n = 0$ and $n\geq1$.
	
	\textbf{Case of $n = 0$}.  Combining with (\ref{u1}) and (\ref{mu1}), then (\ref{en0}) is equivalent to
	\begin{align*}
		&\frac{\delta}{(n_0+1)^{\sigma}(\ln{(n_0+1)})^{\beta}}-\frac{\delta}{n_0^{\sigma}(\ln{n_0})^{\beta}}+\left( \frac{\delta}{n_0^{\sigma}(\ln{n_0})^{\beta}}\right) ^p \nonumber \\&
		\times \left[ \frac{1}{2}\left( \frac{\delta}{n_0^{\sigma}(\ln{n_0})^{\beta}}-\frac{\delta}{(n_0+1)^{\sigma}(\ln{(n_0+1)})^{\beta}}\right) ^2\right] ^{\frac{q}{2}} \leq 0,
	\end{align*}
	the above is satisfied if we choose $\delta \leq \delta_0$ with
	\begin{align*}
		\delta_0&= 2^{\frac{q}{2(p+q-1)}} \left[\frac{1}{n_0^{\sigma}(\ln{n_0})^{\beta}}-\frac{1}{(n_0+1)^{\sigma}(\ln{(n_0+1)})^{\beta}} \right]^{\frac{1}{p+q-1}} \\ &
		\times\left( n_0^{\sigma}(\ln{n_0})^{\beta}\right)^{\frac{p}{p+q-1}} \left[ \frac{1}{n_0^{\sigma}(\ln{n_0})^{\beta}}-\frac{1}{(n_0+1)^{\sigma}(\ln{(n_0+1)})^{\beta}}\right]^{-\frac{q}{p+q-1}}.
	\end{align*}
	
	\textbf{Case of $n\geq1$}. Combining with (\ref{u1}) and (\ref{mu1}), then (\ref{en}) is equivalent to
	\begin{align*}
		&\frac{\delta\frac{(n+n_0)^{\lambda}(\ln{(n+n_0)})^{\beta+\epsilon}}{(n+n_0+1)^{\sigma}(\ln{(n+n_0+1)})^{\beta}}+\delta\frac{(n+n_0-1)^{\lambda}(\ln{(n+n_0-1)})^{\beta+\epsilon}}{(n+n_0-1)^{\sigma}(\ln{(n+n_0-1)})^{\beta}}}{(n+n_0)^{\lambda}(\ln{(n+n_0)})^{\beta+\epsilon}+(n+n_0-1)^{\lambda}(\ln{(n+n_0-1)})^{\beta+\epsilon}}	\nonumber \\ &
		-\frac{\delta}{(n+n_0)^{\sigma}(\ln{(n+n_0)})^{\beta}}+\left(\frac{\delta}{(n+n_0)^{\sigma}(\ln{(n+n_0)})^{\beta}} \right)^p \nonumber \\ &
		\times\delta^q\left[ \frac{(n+n_0)^{\lambda}(\ln{(n+n_0)})^{\beta+\epsilon}\left(\frac{1}{(n+n_0)^{\sigma}(\ln{(n+n_0)})^{\beta}}-\frac{1}{(n+n_0+1)^{\sigma}(\ln{(n+n_0+1)})^{\beta}} \right)^2 }{2(n+n_0)^{\lambda}(\ln{(n+n_0)})^{\beta+\epsilon}+2(n+n_0-1)^{\lambda}(\ln{(n+n_0-1)})^{\beta+\epsilon}}\right.\nonumber \\& \left.
		+\frac{(n+n_0-1)^{\lambda}(\ln{(n+n_0-1)})^{\beta+\epsilon}\left(\frac{1}{(n+n_0-1)^{\sigma}(\ln{(n+n_0-1)})^{\beta}}-\frac{1}{(n+n_0)^{\sigma}(\ln{(n+n_0)})^{\beta}} \right)^2 }{2(n+n_0)^{\lambda}(\ln{(n+n_0)})^{\beta+\epsilon}+2(n+n_0-1)^{\lambda}(\ln{(n+n_0-1)})^{\beta+\epsilon}}
		\right]^\frac{q}{2}\leq0,
	\end{align*}
	The above is equivalent to the following estimate holds  for all $n\geq1$
\begin{align}\label{3-1-1}
\delta^{p+q-1}\leq \Lambda_1(n+n_0),
\end{align}
%	\begin{align}\label{3-1-1}
%		&\delta^{p+q-1}\leq\left[1-\frac{(n+n_0)^{\sigma}(\ln{(n+n_0)})^{\beta}\left( \frac{(n+n_0)^{\lambda}(\ln{(n+n_0)})^{\beta+\epsilon}}{(n+n_0+1)^{\sigma}(\ln{(n+n_0+1)})^{\beta}}+\frac{(n+n_0-1)^{\lambda}(\ln{(n+n_0-1)})^{\beta+\epsilon}}{(n+n_0-1)^{\sigma}(\ln{(n+n_0-1)})^{\beta}}\right) }{(n+n_0)^{\lambda}(\ln{(n+n_0)})^{\beta+\epsilon}+(n+n_0-1)^{\lambda}(\ln{(n+n_0-1)})^{\beta+\epsilon}}\right] \nonumber \\ &
%		\times ((n+n_0)^{\sigma}(\ln{(n+n_0)})^{\beta})^{p-1}\nonumber \\ &
%		\times\left[ \frac{(n+n_0)^{\lambda}(\ln{(n+n_0)})^{\beta+\epsilon}\left(\frac{1}{(n+n_0)^{\sigma}(\ln{(n+n_0)})^{\beta}}-\frac{1}{(n+n_0+1)^{\sigma}(\ln{(n+n_0+1)})^{\beta}} \right)^2 }{2(n+n_0)^{\lambda}(\ln{(n+n_0)})^{\beta+\epsilon}+2(n+n_0-1)^{\lambda}(\ln{(n+n_0-1)})^{\beta+\epsilon}}\right.\nonumber \\& \left.
%		+\frac{(n+n_0-1)^{\lambda}(\ln{(n+n_0-1)})^{\beta+\epsilon}\left(\frac{1}{(n+n_0-1)^{\sigma}(\ln{(n+n_0-1)})^{\beta}}-\frac{1}{(n+n_0)^{\sigma}(\ln{(n+n_0)})^{\beta}} \right)^2 }{2(n+n_0)^{\lambda}(\ln{(n+n_0)})^{\beta+\epsilon}+2(n+n_0-1)^{\lambda}(\ln{(n+n_0-1)})^{\beta+\epsilon}}
%		\right]^{-\frac{q}{2}},
%	\end{align}
%holds.
where
	\begin{align}\label{3-1-2}
		\Lambda_1(n):=&(n^{\sigma}(\ln{n})^{\beta})^{p-1}\left[ 1-\frac{n^{\sigma}(\ln{n})^{\beta}\left( \frac{n^{\lambda}(\ln{n})^{\beta+\epsilon}}{(n+1)^{\sigma}(\ln{(n+1)})^{\beta}}+\frac{(n-1)^{\lambda}(\ln{(n-1)})^{\beta+\epsilon}}{(n-1)^{\sigma}(\ln{(n-1)})^{\beta}}\right) }{n^{\lambda}(\ln{n})^{\beta+\epsilon}+(n-1)^{\lambda}(\ln{(n-1)})^{\beta+\epsilon}}\right] \nonumber \\
%&\times (n^{\sigma}(\ln{n})^{\beta})^{p-1}\nonumber \\
&
		\times\left[ \frac{n^{\lambda}(\ln{n})^{\beta+\epsilon}\left(\frac{1}{n^{\sigma}(\ln{n})^{\beta}}-\frac{1}{(n+1)^{\sigma}(\ln{(n+1)})^{\beta}} \right)^2 }{2n^{\lambda}(\ln{n})^{\beta+\epsilon}+2(n-1)^{\lambda}(\ln{(n-1)})^{\beta+\epsilon}}\right.\nonumber \\& \left.
		+\frac{(n-1)^{\lambda}(\ln{(n-1)})^{\beta+\epsilon}\left(\frac{1}{(n-1)^{\sigma}(\ln{(n-1)})^{\beta}}-\frac{1}{n^{\sigma}(\ln{n})^{\beta}} \right)^2 }{2n^{\lambda}(\ln{n})^{\beta+\epsilon}+2(n-1)^{\lambda}(\ln{(n-1)})^{\beta+\epsilon}}
		\right]^{-\frac{q}{2}}.
	\end{align}
Hence, we have
	\begin{align}\label{3-1-lim}
		\lim_{n\to \infty} \Lambda_1(n)=(\frac{2}{\sigma})^{\frac{q}{2}-1} \epsilon.
	\end{align}
The detailed calculation of (\ref{3-1-lim}) is as follows: by using the facts that
$$\frac{\ln{(n-1)}}{\ln{n}}=1-\frac{1}{n\ln{n}}-\frac{1}{2n^2\ln{n}}+o(\frac{1}{2n^2\ln{n}}),$$
and
	$$(1-\frac{1}{n})^{\alpha}=1-\frac{\alpha}{n}-\frac{\alpha(\alpha-1)}{2n^2}+O(\frac{1}{n^3}).$$
and let us first deal with term
	\begin{align}\label{A1}
		A_1(n):=&\left( \frac{n^{\lambda}(\ln{n})^{\beta+\epsilon}\left(\frac{1}{n^{\sigma}(\ln{n})^{\beta}}-\frac{1}{(n+1)^{\sigma}(\ln{(n+1)})^{\beta}} \right)^2 }{2n^{\lambda}(\ln{n})^{\beta+\epsilon}+2(n-1)^{\lambda}(\ln{(n-1)})^{\beta+\epsilon}}\right.\nonumber \\& \left.
		+\frac{(n-1)^{\lambda}(\ln{(n-1)})^{\beta+\epsilon}\left(\frac{1}{(n-1)^{\sigma}(\ln{(n-1)})^{\beta}}-\frac{1}{n^{\sigma}(\ln{n})^{\beta}} \right)^2 }{2n^{\lambda}(\ln{n})^{\beta+\epsilon}+2(n-1)^{\lambda}(\ln{(n-1)})^{\beta+\epsilon}}
		\right)^{-\frac{q}{2}},
	\end{align}
	which is equal to
	\begin{align}\label{3-1-3}
		2^{\frac{q}{2}}\left( \frac{1+(1-\frac{1}{n})^{\lambda}(\frac{\ln{(n-1)}}{\ln{n}})^{\beta+\epsilon}}{\left(\frac{1}{n^{\sigma}(\ln{n})^{\beta}}-\frac{1}{(n+1)^{\sigma}(\ln{(n+1)})^{\beta}} \right)^2+(1-\frac{1}{n})^{\lambda}(\frac{\ln{(n-1)}}{\ln{n}})^{\beta+\epsilon}\left(\frac{1}{(n-1)^{\sigma}(\ln{(n-1)})^{\beta}}-\frac{1}{n^{\sigma}(\ln{n})^{\beta}} \right)^2}\right) ^{\frac{q}{2}}.
	\end{align}
	Notice that
	\begin{align}\label{3-1-4}
		\frac{1}{n^{\sigma}(\ln{n})^{\beta}}-\frac{1}{(n+1)^{\sigma}(\ln{(n+1)})^{\beta}} &=\frac{1}{n^{\sigma}(\ln{n})^{\beta}}\left( 1-(1-\frac{1}{n+1})^{\sigma}(\frac{\ln{n}}{\ln{(n+1)}})^{\beta}\right) \nonumber\\&
		=\frac{1}{n^{\sigma}(\ln{n})^{\beta}}\left(\frac{\sigma}{n+1} +\frac{\beta}{(n+1)\ln{(n+1)}}+o(\frac{1}{n\ln{n}})\right)\nonumber\\&
		=\frac{1}{n^{\sigma}(\ln{n})^{\beta}}\left(\frac{\sigma}{n} +o(\frac{1}{n})\right).
	\end{align}
	Similarly
	\begin{align}\label{3-1-5}
		\frac{1}{(n-1)^{\sigma}(\ln{(n-1)})^{\beta}}-\frac{1}{n^{\sigma}(\ln{n})^{\beta}}
		=\frac{1}{n^{\sigma}(\ln{n})^{\beta}}\left(\frac{\sigma}{n} +o(\frac{1}{n})\right).
	\end{align}
	Combining (\ref{3-1-4}) and (\ref{3-1-5}) with (\ref{3-1-3}), we obtain
	\begin{align}\label{3-1-6}
		A_1(n)=(\frac{2}{\sigma})^{\frac{q}{2}}(n^{\sigma+1}(\ln{n})^{\beta})^q\left( \frac{2+o(1)}{2+o(1)}\right)^{\frac{q}{2}}
		\asymp (\frac{2}{\sigma})^{\frac{q}{2}}(n^{\sigma+1}(\ln{n})^{\beta})^q.
	\end{align}
	Substituting (\ref{3-1-6}) into (\ref{3-1-2}), we have
	\begin{align*}
		\lim_{n\to\infty}\Lambda_1(n) =&\lim_{n\to\infty} (\frac{2}{\sigma})^{\frac{q}{2}}(n^{\sigma}(\ln{n})^{\beta})^{p+q-1}n^q  \left( 1-\frac{n^{\sigma}(\ln{n})^{\beta}\left( \frac{n^{\lambda}(\ln{n})^{\beta+\epsilon}}{(n+1)^{\sigma}(\ln{(n+1)})^{\beta}}+\frac{(n-1)^{\lambda}(\ln{(n-1)})^{\beta+\epsilon}}{(n-1)^{\sigma}(\ln{(n-1)})^{\beta}}\right) }{n^{\lambda}(\ln{n})^{\beta+\epsilon}+(n-1)^{\lambda}(\ln{(n-1)})^{\beta+\epsilon}}\right)
		\nonumber\\
		= &\lim_{n\to\infty}(\frac{2}{\sigma})^{\frac{q}{2}}(n^{\sigma}(\ln{n})^{\beta})^{p+q-1}n^q  \left( 1-\frac{(1-\frac{1}{n+1})^{\sigma}(\frac{\ln{n}}{\ln{(n+1)}})^{\beta}
+(1-\frac{1}{n})^{\lambda-\sigma}(\frac{\ln{(n-1)}}{\ln{n}})^{\epsilon}}{1+(1-\frac{1}{n})^{\lambda}(\frac{\ln{(n-1)}}{\ln{n}})^{\beta+\epsilon}}\right)
\nonumber\\
=&\lim_{n\to\infty}(\frac{2}{\sigma})^{\frac{q}{2}}n^{2}\ln{n}
		\left( 1-\frac{(1-\frac{1}{n+1})^{\sigma}(\frac{\ln{n}}{\ln{(n+1)}})^{\beta}+(1-\frac{1}{n})(\frac{\ln{(n-1)}}
{\ln{n}})^{\epsilon}}{1+(1-\frac{1}{n})^{\lambda}(\frac{\ln{(n-1)}}{\ln{n}})^{\beta+\epsilon}}\right).
	\end{align*}
where we have used that $\lambda=\frac{p+1}{p+q-1}$, $\beta=\frac{1}{p+q-1}$, $\sigma=\frac{2-q}{p+q-1}=\lambda-1$.
	%\begin{align*}
%		&T^1_n \asymp (\frac{2}{\sigma})^{\frac{q}{2}}n^{2}\ln{n}
%		\left( 1-\frac{(1-\frac{1}{n+1})^{\sigma}(\frac{\ln{n}}{\ln{(n+1)}})^{\beta}+(1-\frac{1}{n})(\frac{\ln{(n-1)}}
%{\ln{n}})^{\epsilon}}{1+(1-\frac{1}{n})^{\lambda}(\frac{\ln{(n-1)}}{\ln{n}})^{\beta+\epsilon}}\right).
%	\end{align*}
	Applying Taylor expansion technique, we obtain
	\begin{align*}
		\lim_{n\to\infty}\Lambda_1(n)
		=(\frac{2}{\sigma})^{\frac{q}{2}-1} \epsilon,
	\end{align*}
%	\begin{align*}
%		\lim_{n\to\infty}T_1(n) =&\lim_{n\to\infty}(\frac{2}{\sigma})^{\frac{q}{2}}n^{2}\ln{n} \\&
%		\times \left( 1-\frac{2-\frac{\sigma}{n+1}-\frac{1}{n}-\frac{\beta}{(n+1)(\ln{(n+1)})}-\frac{\epsilon}{n\ln{n}}+\frac{\sigma(\sigma-1)}{2(n+1)^2}
%+\frac{2\sigma\beta-\beta}{2(n+1)^2\ln{(n+1)}}+\frac{\epsilon}{2n^2\ln{n}}}{2-\frac{\lambda}{n}-\frac{\beta+\epsilon}{n\ln{n}}
%+\frac{\lambda(\lambda-1)}{2n^2}+\frac{2\lambda(\beta+\epsilon)-(\beta+\epsilon)}{2n^2\ln{n}}}\right) \\
%		=&\lim_{n\to\infty}(\frac{2}{\sigma})^{\frac{q}{2}}n^{2}\ln{n}(\frac{\frac{\epsilon\sigma}{n^2\ln{n}}}{2})=(\frac{2}{\sigma})^{\frac{q}{2}-1} \epsilon,
%	\end{align*}
	which yields (\ref{3-1-lim}).
	This implies that there exists some large enough $n_0$ such that for all $n\geq 0$, the RHS of (\ref{3-1-1}) is bounded from above.

	Finally, take $n_0$ as above and $\delta=\min\{\delta_0, \delta_1\}$, where we choose $\delta_1=\left[\frac{(\frac{2}{\sigma})^{\frac{q}{2}-1} \epsilon}{2}\right]^{\frac{1}{p+q-1}}$.
It follows that when $(p, q)\in G_1$, $u$ is a solution to (\ref{ieq}). Hence we complete the proof for Theorem \ref{thm2} (I).
\end{proof}
\vskip1ex

\begin{proof}[\rm{Proof of Theorem \ref{thm2} (II)}]
	When $(p, q)\in G_2$, take $\mu$ and $u$ as follows
	\begin{align}\label{mu2}
		&\mu_{xy}=\mu_n=\frac{(n+n_0)(\ln{(n+n_0)})^{1+\epsilon}}{(N-1)^n},\quad \mbox{for any $(x,y)\in E_n$, $n\geq0$},
	\end{align}
	\begin{align}\label{u2}
		&u(x)=u_n=\frac{1}{(\ln{(n+n_0)})^{\frac{\epsilon}{2}}}+1,
		\quad \mbox{for any $x\in D_n$, $n\geq0$}.
	\end{align}
	For $n \geq 2$, it is easy to verify that
	$$\mu(B(o,n))\asymp  n^{2}(\ln{n})^{1+\epsilon},$$
which yields (\ref{e-vol-2}) holds.
	%$$\mu(B(o,n))=\sum\limits^n_{k=0}\mu(D_k)\asymp \sum\limits^n_{k=0}(N-1)^k \mu_k\asymp  n^{2}(\ln{n})^{1+\epsilon}.$$

Now we need to verify that (\ref{en0}) and (\ref{en}) holds under the above choice of $\mu$ and $u$.
	
	\textbf{Case of $n = 0$}. Substituting (\ref{u2}) and (\ref{mu2}) into (\ref{en0}), we obtain
	\begin{align}\label{3-2-1}
		&\frac{1}{(\ln{(n_0+1)})^{\frac{\epsilon}{2}}}-\frac{1}{(\ln{n_0})^{\frac{\epsilon}{2}}}+\left( \frac{1}{(\ln{n_0})^{\frac{\epsilon}{2}}}+1\right) ^p\nonumber \\&
		\times \left[\frac{1}{2} \left( \frac{1}{(\ln{n_0})^{\frac{\epsilon}{2}}}-\frac{1}{(\ln{(n_0+1)})^{\frac{\epsilon}{2}}}\right)^2 \right] ^{\frac{q}{2}} \leq 0.
	\end{align}
	%which is equivalent to
%	\begin{align}\label{3-2-1}
%		1\leq 2^{\frac{q}{2}}\left( \frac{1}{(\ln{n_0})^{\frac{\epsilon}{2}}}+1\right) ^{-p}\left( \frac{1}{(\ln{n_0})^{\frac{\epsilon}{2}}}-\frac{1}{(\ln{(n_0+1)})^{\frac{\epsilon}{2}}}\right)^{1-q}.
%	\end{align}
	It is easy to verify that the above holds for $q\geq 2$ when $n_0$ is large enough.
	
	\textbf{Case of $n\geq1$}. By substituting (\ref{u2}) and (\ref{mu2}), we know (\ref{en}) is equivalent to
	\begin{align*}
		&\frac{\frac{(n+n_0)(\ln{(n+n_0)})^{1+\epsilon}}{ (\ln{(n+n_0+1)})^{\frac{\epsilon}{2}}}+\frac{(n+n_0-1)(\ln{(n+n_0-1)})^{1+\epsilon}}{(\ln{(n+n_0-1)})^{\frac{\epsilon}{2}}}}{(n+n_0)(\ln{(n+n_0)})^{1+\epsilon}+(n+n_0-1)(\ln{(n+n_0-1)})^{1+\epsilon}}	\nonumber \\ &
		-\frac{1}{(\ln{(n+n_0)})^{\frac{\epsilon}{2}}}+\left(\frac{1}{(\ln{(n+n_0)})^{\frac{\epsilon}{2}}}+1 \right)^p \nonumber \\ &
		\times \left( \frac{(n+n_0)(\ln{(n+n_0)})^{1+\epsilon}\left(\frac{1}{ (\ln{(n+n_0)})^{\frac{\epsilon}{2}}}-\frac{1}{(\ln{(n+n_0+1)})^{\frac{\epsilon}{2}}} \right)^2}{2(n+n_0)(\ln{(n+n_0)})^{1+\epsilon}+2(n+n_0-1)(\ln{(n+n_0-1)})^{1+\epsilon}}\right.\nonumber \\& \left.
		+\frac{(n+n_0-1)(\ln{(n+n_0-1)})^{1+\epsilon}\left(\frac{1}{(\ln{(n+n_0-1)})^{\frac{\epsilon}{2}}}-\frac{1}{(\ln{(n+n_0)})^{\frac{\epsilon}{2}}} \right)^2}{2(n+n_0)(\ln{(n+n_0)})^{1+\epsilon}+2(n+n_0-1)(\ln{(n+n_0-1)})^{1+\epsilon}}
		\right)^\frac{q}{2} \leq0,
	\end{align*}
	namely
\begin{equation}\label{3-2-2}
1\leq \Lambda_2(n+n_0),
\end{equation}
where
%	\begin{align}\label{3-2-2}
%		&1\leq\left(1-\frac{(\ln{(n+n_0)})^{\frac{\epsilon}{2}}\left( \frac{(n+n_0)(\ln{(n+n_0)})^{1+\epsilon}}{(\ln{(n+n_0+1)})^{\frac{\epsilon}{2}}}+
%			\frac{(n+n_0-1)(\ln{(n+n_0-1)})^{1+\epsilon}}{(\ln{(n+n_0-1)})^{\frac{\epsilon}{2}}}\right)}{(n+n_0)(\ln{(n+n_0)})^{1+\epsilon}+(n+n_0-1)(\ln{(n+n_0-1)})^{1+\epsilon}}\right) \nonumber \\ &
%		\times \left( \frac{1}{(\ln{(n+n_0)})^{\frac{\epsilon}{2}}}+1\right)^{-p} ((\ln{(n+n_0)})^{\frac{\epsilon}{2}})^{-1}\nonumber \\ &
%		\times\left( \frac{(n+n_0)(\ln{(n+n_0)})^{1+\epsilon}\left(\frac{1}{(\ln{(n+n_0)})^{\frac{\epsilon}{2}}}-\frac{1}{(\ln{(n+n_0+1)})^{\frac{\epsilon}{2}}} \right)^2 }{2(n+n_0)(\ln{(n+n_0)})^{1+\epsilon}+2(n+n_0-1)(\ln{(n+n_0-1)})^{1+\epsilon}}\right.\nonumber \\& \left.
%		+\frac{(n+n_0-1)(\ln{(n+n_0-1)})^{1+\epsilon}\left(\frac{1}{(\ln{(n+n_0-1)})^{\frac{\epsilon}{2}}}-\frac{1}{(\ln{(n+n_0)})^{\frac{\epsilon}{2}}} \right)^2}{2(n+n_0)(\ln{(n+n_0)})^{1+\epsilon}+2(n+n_0-1)(\ln{(n+n_0-1)})^{1+\epsilon}}.
%		\right)^{-\frac{q}{2}}
%	\end{align}
	\begin{align}\label{3-2-3}
		\Lambda_2(n):=&\left(1-\frac{(\ln{n})^{\frac{\epsilon}{2}}\left( \frac{n(\ln{n})^{1+\epsilon}}{(\ln{(n+1)})^{\frac{\epsilon}{2}}}+
			\frac{(n-1)(\ln{(n-1)})^{1+\epsilon}}{(\ln{(n-1)})^{\frac{\epsilon}{2}}}\right)}{n(\ln{n})^{1+\epsilon}+(n-1)(\ln{(n-1)})^{1+\epsilon}}\right)
		\left( \frac{1}{(\ln{n})^{\frac{\epsilon}{2}}}+1\right)^{-p} ((\ln{n})^{\frac{\epsilon}{2}})^{-1}\nonumber \\ &
		\times\left(
		\frac{2n(\ln{n})^{1+\epsilon}+2(n-1)(\ln{(n-1)})^{1+\epsilon}}{n(\ln{n})^{1+\epsilon}\left(\frac{1}{(\ln{n})^{\frac{\epsilon}{2}}}-\frac{1}{(\ln{(n+1)})^{\frac{\epsilon}{2}}} \right)^2+(n-1)(\ln{(n-1)})^{1+\epsilon}\left(\frac{1}{(\ln{(n-1)})^{\frac{\epsilon}{2}}}-\frac{1}{(\ln{n})^{\frac{\epsilon}{2}}} \right)^2}
		\right)^{\frac{q}{2}}.
	\end{align}
Applying the same argument as in proof of Theorem \ref{thm2} (I), we have
	\begin{align}\label{3-2-lim}
		\lim_{n\to \infty} \Lambda_2(n)=\infty.
	\end{align}
	This implies that there exists some large $n_0$ such that both of (\ref{3-2-1}) and (\ref{3-2-2}) hold, hence we complete the proof for Theorem \ref{thm2} (II).
\end{proof}
\vskip1ex

\begin{proof}[\rm{Proof of Theorem \ref{thm2} (III)}]
	When $(p, q)\in G_3$, take $\mu$ and $u$ as follows
	\begin{align}\label{mu3}
		&\mu_{xy}=\mu_n=\frac{(n+n_0)^{\frac{1}{q-1}}(\ln{(n+n_0)})^{\frac{1}{q-1}+\epsilon}}{(N-1)^n},\quad \mbox{for any $(x,y)\in E_n$, $n\geq0$}.
	\end{align}
	\begin{align}\label{u3}
		&u(x)=u_n=\frac{\delta}{(n+n_0)^{\frac{2-q}{q-1}}(\ln{(n+n_0)})^{\frac{1}{q-1}}}+1,
		\quad \mbox{for any $x\in D_n$, $n\geq0$},
	\end{align}
	where $n_0 \geq 2$ and $0<\delta <1 $ are to be chosen later.
	
Under the choice of $\mu$ in (\ref{mu3}),
%we have
%	$$\mu(B(o,n))\asymp  n^{\frac{q}{q-1}}(\ln{n})^{\frac{1}{q-1}+\epsilon},$$
it is easy to verify that (\ref{e-vol-3}) holds.
	%$$\mu(B(o,n))=\sum\limits^n_{k=0}\mu(D_k)\asymp \sum\limits^n_{k=0}(N-1)^k \mu_k\asymp  n^{\frac{q}{q-1}}(\ln{n})^{\frac{1}{q-1}+\epsilon}.$$
 For brevity, let us denote $\beta=\frac{1}{q-1}$, $\sigma=\frac{2-q}{q-1}$.

 Let us deal with cases of  $n = 0$ and $n \geq1$.

	\textbf{Case of $n = 0$}. Substituting (\ref{u3}) and (\ref{mu3}) into (\ref{en0}), we have
	\begin{align*}
		&\frac{\delta}{(n_0+1)^{\sigma}(\ln{(n_0+1)})^{\beta}}-\frac{\delta}{n_0^{\sigma}(\ln{n_0})^{\beta}}+\left( \frac{\delta}{n_0^{\sigma}(\ln{n_0})^{\beta}}+1\right) ^p \nonumber \\&
		\times \left( \frac{1}{2}\left( \frac{\delta}{n_0^{\sigma}(\ln{n_0})^{\beta}}-\frac{\delta}{(n_0+1)^{\sigma}(\ln{(n_0+1)})^{\beta}}\right) ^2\right) ^{\frac{q}{2}} \leq 0,
	\end{align*}
	such $\delta$ is accessible by letting $\delta \leq \delta_0$, where
	\begin{align*}
		\delta_0&= 2^{\frac{q}{2(q-1)}} \left(\frac{1}{n_0^{\sigma}(\ln{n_0})^{\beta}}-\frac{1}{(n_0+1)^{\sigma}(\ln{(n_0+1)})^{\beta}}\right)^{\frac{1}{q-1}} \\ &
		\times \left( \frac{1}{n_0^{\sigma}(\ln{n_0})^{\beta}}+1\right) ^{\frac{-p}{q-1}}\left( \frac{1}{n_0^{\sigma}(\ln{n_0})^{\beta}}-\frac{1}{(n_0+1)^{\sigma}(\ln{(n_0+1)})^{\beta}}\right)^{-\frac{q}{q-1}},
	\end{align*}
	where we used $\delta<1$, $p<0$.
	
	\textbf{Case of $n\geq1$}. By substituting (\ref{u3}) and (\ref{mu3}), (\ref{en}) is equivalent to
	\begin{align*} &\frac{\delta\frac{(n+n_0)^{\beta}(\ln{(n+n_0)})^{\beta+\epsilon}}{(n+n_0+1)^{\sigma}(\ln{(n+n_0+1)})^{\beta}}+\delta\frac{(n+n_0-1)^{\beta}(\ln{(n+n_0-1)})^{\beta+\epsilon}}{(n+n_0-1)^{\sigma}(\ln{(n+n_0-1)})^{\beta}}}{(n+n_0)^{\beta}(\ln{(n+n_0)})^{\beta+\epsilon}+(n+n_0-1)^{\beta}(\ln{(n+n_0-1)})^{\beta+\epsilon}}	\nonumber \\ &
		-\frac{\delta}{(n+n_0)^{\sigma}(\ln{(n+n_0)})^{\beta}}+\left(\frac{\delta}{(n+n_0)^{\sigma}(\ln{(n+n_0)})^{\beta}} +1\right)^p \nonumber \\ &
		\times\delta^q\left( \frac{(n+n_0)^{\beta}(\ln{(n+n_0)})^{\beta+\epsilon}\left(\frac{1}{(n+n_0)^{\sigma}(\ln{(n+n_0)})^{\beta}}-\frac{1}{(n+n_0+1)^{\sigma}(\ln{(n+n_0+1)})^{\beta}} \right)^2 }{2(n+n_0)^{\beta}(\ln{(n+n_0)})^{\beta+\epsilon}+2(n+n_0-1)^{\beta}(\ln{(n+n_0-1)})^{\beta+\epsilon}}\right.\nonumber \\& \left.		+\frac{(n+n_0-1)^{\beta}(\ln{(n+n_0-1)})^{\beta+\epsilon}\left(\frac{1}{(n+n_0-1)^{\sigma}(\ln{(n+n_0-1)})^{\beta}}-\frac{1}{(n+n_0)^{\sigma}(\ln{(n+n_0)})^{\beta}} \right)^2 }{2(n+n_0)^{\beta}(\ln{(n+n_0)})^{\beta+\epsilon}+2(n+n_0-1)^{\beta}(\ln{(n+n_0-1)})^{\beta+\epsilon}}
		\right)^\frac{q}{2}\leq0,
	\end{align*}
	which is  equivalent to
	\begin{equation*}\label{3-3-1}
		\delta^{q-1}\leq \Lambda_3(n+n_0),
	\end{equation*}
%	\begin{align}\label{3-3-1}
%		&\delta^{q-1}\leq\left(1-\frac{(n+n_0)^{\sigma}(\ln{(n+n_0)})^{\beta}\left( \frac{(n+n_0)^{\beta}(\ln{(n+n_0)})^{\beta+\epsilon}}{(n+n_0+1)^{\sigma}(\ln{(n+n_0+1)})^{\beta}}+\frac{(n+n_0-1)^{\beta}(\ln{(n+n_0-1)})^{\beta+\epsilon}}{(n+n_0-1)^{\sigma}(\ln{(n+n_0-1)})^{\beta}}\right) }{(n+n_0)^{\beta}(\ln{(n+n_0)})^{\beta+\epsilon}+(n+n_0-1)^{\beta}(\ln{(n+n_0-1)})^{\beta+\epsilon}}\right) \nonumber \\ &
%		\times \left(\frac{1}{(n+n_0)^{\sigma}(\ln{(n+n_0)})^{\beta}} +1\right)^{-p}((n+n_0)^{\sigma}(\ln{(n+n_0)})^{\beta})^{-1}\nonumber \\ &
%		\times\left( \frac{(n+n_0)^{\beta}(\ln{(n+n_0)})^{\beta+\epsilon}\left(\frac{1}{(n+n_0)^{\sigma}(\ln{(n+n_0)})^{\beta}}-\frac{1}{(n+n_0+1)^{\sigma}(\ln{(n+n_0+1)})^{\beta}} \right)^2 }{2(n+n_0)^{\beta}(\ln{(n+n_0)})^{\beta+\epsilon}+2(n+n_0-1)^{\beta}(\ln{(n+n_0-1)})^{\beta+\epsilon}}\right.\nonumber \\& \left. +\frac{(n+n_0-1)^{\beta}(\ln{(n+n_0-1)})^{\beta+\epsilon}\left(\frac{1}{(n+n_0-1)^{\sigma}(\ln{(n+n_0-1)})^{\beta}}-\frac{1}{(n+n_0)^{\sigma}(\ln{(n+n_0)})^{\beta}} \right)^2 }{2(n+n_0)^{\beta}(\ln{(n+n_0)})^{\beta+\epsilon}+2(n+n_0-1)^{\beta}(\ln{(n+n_0-1)})^{\beta+\epsilon}}
%		\right)^{-\frac{q}{2}},
%	\end{align}
where
\begin{eqnarray*}\label{3-3-2}
	\Lambda_3(n):=&\left( 1-\frac{n^{\sigma}(\ln{n})^{\beta}\left( \frac{n^{\beta}(\ln{n})^{\beta+\epsilon}}{(n+1)^{\sigma}(\ln{(n+1)})^{\beta}}+\frac{(n-1)^{\beta}(\ln{(n-1)})^{\beta+\epsilon}}{(n-1)^{\sigma}(\ln{(n-1)})^{\beta}}\right) }{n^{\beta}(\ln{n})^{\beta+\epsilon}+(n-1)^{\beta}(\ln{(n-1)})^{\beta+\epsilon}}\right) \\ &
		\times \left(\frac{1}{(n)^{\sigma}(\ln{(n)})^{\beta}} +1\right)^{-p}((n)^{\sigma}(\ln{(n)})^{\beta})^{-1} \\ &
		\times\left( \frac{n^{\beta}(\ln{n})^{\beta+\epsilon}\left(\frac{1}{n^{\sigma}(\ln{n})^{\beta}}-\frac{1}{(n+1)^{\sigma}(\ln{(n+1)})^{\beta}} \right)^2 }{2n^{\beta}(\ln{n})^{\beta+\epsilon}+2(n-1)^{\beta}(\ln{(n-1)})^{\beta+\epsilon}}\right.\\& \left.
		+\frac{(n-1)^{\beta}(\ln{(n-1)})^{\beta+\epsilon}\left(\frac{1}{(n-1)^{\sigma}(\ln{(n-1)})^{\beta}}-\frac{1}{n^{\sigma}(\ln{n})^{\beta}} \right)^2 }{2n^{\beta}(\ln{n})^{\beta+\epsilon}+2(n-1)^{\beta}(\ln{(n-1)})^{\beta+\epsilon}}
		\right)^{-\frac{q}{2}}.
	\end{eqnarray*}
	By the similiar computation of the proof of Theorem \ref{thm2} (I), we have that
	\begin{align*}
		\lim_{n\to \infty} \Lambda_3(n)=(\frac{2}{\sigma})^{\frac{q}{2}-1} \epsilon.
	\end{align*}
This implies that there exists some large $n_0$ such that for all $n\geq 0$, the RHS of (\ref{3-3-1}) is bounded from above by $\delta_1^p$, where $\delta_1=\left[\frac{(\frac{2}{\sigma})^{\frac{q}{2}-1} \epsilon}{2}\right]^{\frac{1}{q-1}}$.
	
	Finally, choosing $n_0$ as above and $\delta=\min\{\delta_0, \delta_1\}$, we obtain that when $(p, q)\in G_3$, $u$ is a solution to (\ref{ieq}). Hence we complete the proof for Theorem \ref{thm2} (III).
\end{proof}
\vskip1ex

\begin{proof}[\rm{Proof of Theorem \ref{thm2} (IV)}]
	When $(p, q)\in G_4$, we take $\mu$ and $u$ as follows
	\begin{equation}\label{mu4}
		\mu_{xy}=\mu_n=\frac{\lambda e^{\lambda(n+n_0)}}{(N-1)^n},\quad \mbox{for any $(x,y)\in E_n$, $n\geq0$},
	\end{equation}
	\begin{equation}\label{u4}
		u(x)=u_n=\frac{\delta}{(n+n_0)}+\delta,
		\quad \mbox{for any $x\in D_n$, $n\geq0$},
	\end{equation}
	where $n_0 \geq 2$  and $\delta>0 $ are to be determined later.
	
	Under the above choices of $\mu$, it follows that (\ref{e-vol-4}) holds.
	%$$\mu(B(o,n))=\sum\limits^n_{k=0}\mu(D_k)\asymp \sum\limits^n_{k=0}(N-1)^k \mu_k\asymp  e^{\lambda n}$$
	
%	The two constants $n_0$ and $\delta$ are closely related to $\lambda$ and we will first determine $n_0$ and then $\delta$. The proof will be separated into the
%	following two cases $n = 0$ and $n \not= 0$.
%	
	\textbf{Case of $n = 0$}. By substituting (\ref{u4}) and (\ref{mu4}), (\ref{en0}) is equivalent to
	\begin{align*}
		\frac{\delta}{n_0+1}-\frac{\delta}{n_0}+\left( \frac{\delta}{n_0}+\delta\right) ^p
		\times \left( \frac{1}{2}\left( \frac{\delta}{n_0}-\frac{\delta}{n_0+1}\right) ^2\right) ^{\frac{1}{2}} \leq 0,
	\end{align*}
	which is satisfied by choosing $\delta \geq\delta_0$ with
	\begin{align*}
		\delta_0&= 2^{\frac{1}{2p}} \left(\frac{1}{n_0^{\sigma}(\ln{n_0})^{\beta}}-\frac{1}{(n_0+1)^{\sigma}(\ln{(n_0+1)})^{\beta}} \right)^{\frac{1}{p}} \\ &
		\times \left( \frac{1}{n_0^{\sigma}(\ln{n_0})^{\beta}}+1\right) ^{-1}\left( \frac{1}{n_0^{\sigma}(\ln{n_0})^{\beta}}-\frac{1}{(n_0+1)^{\sigma}(\ln{(n_0+1)})^{\beta}}\right)^{-\frac{1}{p}}.
	\end{align*}
	
	\textbf{Case of $n\geq1$}. By substituting (\ref{u4}) and (\ref{mu4}), (\ref{en}) is equivalent to
	\begin{align*}
		&\frac{e^{\lambda(n+n_0)}  \frac{\delta}{n+n_0+1}+e^{\lambda(n+n_0-1)} \frac{\delta}{n+n_0-1}}{e^{\lambda(n+n_0)} +e^{\lambda(n+n_0-1)} }-\frac{\delta}{n+n_0}\nonumber\\+(\frac{\delta}{n+n_0}+\delta)^p(&\frac{e^{\lambda(n+n_0)} (\frac{\delta}{n+n_0+1}-\frac{\delta}{n+n_0})^2+e^{\lambda(n+n_0-1)} (\frac{\delta}{n+n_0-1}-\frac{\delta}{n+n_0})^2}{2e^{\lambda(n+n_0)} +2e^{\lambda(n+n_0-1)} })^{\frac{1}{2}} \leq 0,
	\end{align*}
	which is  equivalent to that $\delta$ satisfies
\begin{equation}\label{3-4-1}
	\delta^{p}\leq \Lambda_4(n+n_0),
\end{equation}
%	\begin{align}\label{3-4-1}
%		\delta^{p}&\leq\left(\frac{1}{n+n_0} -\frac{e^{\lambda(n+n_0)}  \frac{1}{n+n_0+1}+e^{\lambda(n+n_0-1)} \frac{1}{n+n_0-1}}{e^{\lambda(n+n_0)} +e^{\lambda(n+n_0-1)} }\right)(\frac{1}{n+n_0}+1)^{-p} \nonumber\\&
%		\times\left(\frac{e^{\lambda(n+n_0)} (\frac{1}{n+n_0+1}-\frac{1}{n+n_0})^2+e^{\lambda(n+n_0-1)} (\frac{1}{n+n_0-1}-\frac{1}{n+n_0})^2}{2e^{\lambda(n+n_0)} +2e^{\lambda(n+n_0-1)} }\right) ^{-\frac{1}{2}}
%	\end{align}
	where
	\begin{align*}
		\Lambda_4(n):=&\left( \frac{1}{n}-\frac{e^{\lambda n}  \frac{1}{n+1}+e^{\lambda(n-1)} \frac{1}{n-1}}{e^{\lambda n} +e^{\lambda(n-1)} }\right)(\frac{1}{n}+1)^{-p} \nonumber\\&
		\times\left(\frac{e^{\lambda n} (\frac{1}{n+1}-\frac{1}{n})^2+e^{\lambda(n-1)} (\frac{1}{n-1}-\frac{1}{n})^2}{2e^{\lambda n} +2e^{\lambda(n-1)} }\right) ^{-\frac{1}{2}}.
	\end{align*}
	Since
	\begin{equation*}
		\lim_{n\to\infty}\Lambda_4(n)= \sqrt{2}\frac{1-e^{-\lambda}}{1+e^{-\lambda}}.
	\end{equation*}
	we obtain that there exists some large $n_0$ such that for all $n\geq 0$, the RHS of (\ref{3-4-1}) is bounded.
Letting $\delta=\max\{\delta_0, \delta_1\}$,
we derive that when $(p, q)\in G_4$, $u$ is a solution to (\ref{ieq}), where $\delta_1:=\left[\frac{\sqrt{2}\frac{1-e^{-\lambda}}{1+e^{-\lambda}}}{2}\right]^{\frac{1}{p}}$. Hence we complete the proof for Theorem \ref{thm2} (IV).
\end{proof}
\vskip1ex
\begin{proof}[\rm{Proof of Theorem \ref{thm2} (V)}]	We divide the proof into two cases:
	\begin{enumerate}
		\item[(V-1).]{$(p,q)\in \{p+q=1,p\geq 0,q>0\};$}
		\item[(V-2).]{$(p,q)\in \{p+q=1,p>1, q<0\};$}
	\end{enumerate}
	In case (V-1), we take $\mu$ and $u$ as follows
	\begin{align}\label{mu5-1}
		&\mu_{xy}=\mu_n=\frac{\lambda e^{\lambda n}}{(N-1)^n},\quad \mbox{for any $(x,y)\in E_n$, $n\geq0$}
	\end{align}
	\begin{align}\label{u5-1}
		&u(x)=u_n=e^{-\frac{\lambda}{4} n},
		\quad \mbox{for any $x\in D_n$, $n\geq0$}.
	\end{align}
	Under these choices of $\mu$, we obtain that (\ref{e-vol-5}) holds. We choose $\lambda$ later.
%	For $n \geq 2$, we compute
%	$$\mu(B(o,n))=\sum\limits^n_{k=0}\mu(D_k)\asymp \sum\limits^n_{k=0}(N-1)^k \mu_k\asymp  e^{\lambda n}.$$
	
%	The large constant $\lambda>0$ is to be chosen later. The proof will be separated into the
%	following two cases $n = 0$ and $n \not= 0$.
	
	\textbf{Case of $n = 0$}. Combining (\ref{u5-1}) and (\ref{mu5-1}), (\ref{en0}) is equivalent to
	\begin{align}\label{3-5-1}
		1\leq 2^{\frac{q}{2}}(1-e^{-\frac{\lambda}{4}})^{-q}(1-e^{-\frac{\lambda}{4}})= 2^{\frac{q}{2}}(1-e^{-\frac{\lambda}{4}})^{1-q},
	\end{align}
	
	\textbf{Case of $n\geq1$}. Combining (\ref{u5-1}) and (\ref{mu5-1}), (\ref{en}) is equivalent to
	\begin{align}\label{3-5-2}
		1&\leq
%2^{\frac{q}{2}}e^{\frac{\lambda}{4}(n+n_0)(p+q-1)}\left(\frac{1+e^{-\lambda}}{(1-e^{-\frac{\lambda}{4}})^2(1+e^{-\frac{\lambda}{2}})} \right) ^{\frac{q}{2}}\left(\frac{(1-e^{-\frac{\lambda}{4}})(1-e^{-\frac{3\lambda}{4}})}{1+e^{-\lambda}} \right) \nonumber \\
2^{\frac{q}{2}}(1-e^{-\frac{\lambda}{4}})^{1-q}\left(\frac{1+e^{-\lambda}}{1+e^{-\frac{\lambda}{2}}} \right) ^{\frac{q}{2}}\left(\frac{1-e^{\frac{3\lambda}{4}}}{1+e^{-\lambda}} \right).
	\end{align}
	It is easy to see that both of (\ref{3-5-1}) and (\ref{3-5-2}) hold by choosing some large $\lambda$. Hence $u$ is a solution to (\ref{ieq}).
	
	In case (V-2), Fix an arbitrary vertex $o\in T_N$ as the root, we choose a special vertex $p$, $p\sim o$. Let us define
	\begin{align*}
		&P:=\{x\in T_N|\mbox{o is not in the path between x and p}\},\\
		&D'_{-n}:=\{x\in  P|d(o, x)=n\} \qquad\mbox{for $n\geq 0$},\\
		&D'_{n}:=\{x\in T_N\setminus P|d(o, x)=n\}\qquad\mbox{for $n\geq 0$}.
	\end{align*}
	At last, we denote by $E'_n$ the collection of all the edges from vertices in $D'_n$ to
	vertices in $D'_{n+1}$ for $n\in Z$.
	
	Take $\mu$ and $u$ as follows
	\begin{align}\label{mu5-2-1}
		&\mu_{xy}=\mu_n=\frac{\lambda e^{\lambda n}}{(N-1)^n},\quad \mbox{for any $(x,y)\in E'_n$, $n\geq0$},
	\end{align}
	\begin{align}\label{mu5-2-2}
		&\mu_{xy}=\mu_n=\frac{\lambda e^{\lambda n}}{(N-1)^{-n-1}},\quad \mbox{for any $(x,y)\in E'_n$, $n\leq 1$},
	\end{align}
	\begin{align}\label{u5-2}
		&u(x)=u_n=e^{-(\lambda-1) n},
		\quad \mbox{for any $x\in D'_n$, $n\in Z$}.
	\end{align}
	Noticing $D_n=D'_{-n}\cup D'_{n}$, for $n \geq 2$, we have
	$$\mu(B(o,n))=\sum\limits^n_{k=0}\mu(D_k)\asymp \sum\limits^n_{k=0}(N-1)^k (\mu_k+\mu_{-k})\asymp  e^{\lambda n},$$
	where $\lambda>0$ is to be chosen later.
	
	Then we check that (\ref{ieq}) holds for the $\mu$ and $u$ given as above: for any $n\geq 0$,
	\begin{align}\label{3-5-2-1}
		&\frac{(N-1)\mu_n u_{n+1}+\mu_{n-1} u_{n-1}}{(N-1)\mu_n+\mu_{n-1}}-u_n\nonumber\\&+u_n^p(\frac{(N-1)\mu_n(u_{n+1}-u_n)^2+\mu_{n-1}(u_{n-1}-u_n)^2}{2(N-1)\mu_n+2\mu_{n-1}})^{\frac{q}{2}} \leq 0,
	\end{align}
	and for any $n\leq -1$,
	\begin{align}\label{3-5-2-2}
		&\frac{\mu_n u_{n+1}+(N-1)\mu_{n-1} u_{n-1}}{\mu_n+(N-1)\mu_{n-1}}-u_n\nonumber\\&+u_n^p(\frac{\mu_n(u_{n+1}-u_n)^2+(N-1)\mu_{n-1}(u_{n-1}-u_n)^2}{2\mu_n+2(N-1)\mu_{n-1}})^{\frac{q}{2}} \leq 0,
	\end{align}
hold.

Combining  (\ref{mu5-2-1})-(\ref{u5-2}), we know (\ref{3-5-2-1}) and (\ref{3-5-2-2}) are equivalent to
	\begin{align*}
		&\frac{e^{\lambda n} e^{-(\lambda-1) (n+1)}+e^{\lambda (n-1)} e^{-(\lambda-1) (n-1)}}{e^{\lambda n} + e^{\lambda (n-1)}}- e^{-(\lambda-1) (n)} \nonumber\\&+ e^{-(\lambda-1) p n}\left(\frac{e^{\lambda n} (e^{-(\lambda-1) (n+1)}- e^{-(\lambda-1)n})^2+ e^{\lambda (n-1)} (e^{-(\lambda-1) (n-1)}- e^{-(\lambda-1) n})^2}{2e^{\lambda n} +2 e^{\lambda (n-1)} }\right)^{\frac{q}{2}}\\&
		\leq 0,\quad\mbox{ for any $n\in \mathbb{Z}$},
	\end{align*}
	namely
	\begin{align}\label{3-5-2-3}
		1&\leq
		(1-e^{-(\lambda-1)})^{1-q}\left(\frac{2+e^{-\lambda}}{1+e^{\lambda-2}} \right) ^{\frac{q}{2}}\left(\frac{1-e^{-1}}{1+e^{-\lambda}} \right).
	\end{align}
	Noticing $q<0$, we obtain RHS of (\ref{3-5-2-3}) tends to infinity as $\lambda\to\infty$. This implies that there exists some large $\lambda$, such that $ u $ is a solution to (\ref{ieq}).
	Hence we complete the proof for Theorem \ref{thm2} (V).
\end{proof}

\end{document}